\DeclareSymbolFont{AMSb}{U}{msb}{m}{n}
\newtheorem{thm}{Theorem}[section]
\newtheorem{lem}[thm]{Lemma}
\newtheorem{cor}[thm]{Corollary}
\newtheorem{prop}[thm]{Proposition}
\theoremstyle{definition}
\newtheorem{rem}[thm]{Remark}
\newtheorem{defn}[thm]{Definition}
\newcommand{\bC}{\mathbb{C}}
\newcommand{\bG}{\mathbb{G}}
\newcommand{\bP}{\mathbb{P}}
\newcommand{\bZ}{\mathbb{Z}}
\newcommand{\cF}{\mathcal{F}}
\newcommand{\cG}{\mathcal{G}}
\newcommand{\cH}{\mathcal{H}}
\newcommand{\cM}{\mathcal{M}}
\newcommand{\cO}{\mathcal{O}}
\newcommand{\cU}{\mathcal{U}}
\newcommand{\cV}{\mathcal{V}}
\newcommand{\cS}{\mathcal{S}}
\newcommand{\hh}{\overline{\mathcal{H}}^{\mathrm{odd}}}
\newcommand{\mm}{\overline{\mathcal{M}}}
\newcommand{\gG}{G^1_{4, \mathrm{tr}}}
\newcommand{\gE}{G^1_{5,\mathrm{tr}}}
\newcommand{\hu}{\overline{\mathcal{H}}}
\newcommand{\lra}{\longrightarrow}
\DeclareMathOperator{\im}{Im}
\DeclareMathOperator{\Pic}{Pic}
\DeclareMathOperator{\Div}{div}
\begin{document}

\title[Alternating Catalan numbers]{Alternating Catalan numbers and curves with triple ramification}

\author[G. Farkas]{Gavril Farkas}
\address{Gavril Farkas: Institut f\"ur Mathematik, Humboldt Universit\"at zu Berlin \hfill \newline\texttt{}
\indent Unter den Linden 6,
10099 Berlin, Germany}
\email{{\tt farkas@math.hu-berlin.de}}

\author[R. Moschetti]{Riccardo Moschetti}
\address{Riccardo Moschetti: Dipartimento di Matematica,  Universit`a degli Studi di Pavia \hfill \newline\texttt{}
\indent Via Ferrata, 1, 27100 Pavia, Italy}
\email{{\tt rmoschetti@gmail.com}}

\author[J.C. Naranjo]{Juan Carlos Naranjo}
\address{Juan Carlos Naranjo: Universitat de Barcelona, Departament de Matem`atiquesi Inform`atica\hfill
\hfill \newline\texttt{}
\indent Gran Via 585, 08007 Barcelona, Spain} \email{{\tt jcnaranjo@ub.edu}}

\author[G.P. Pirola]{Gian Pietro Pirola}
\address{Gian Pietro Pirola: Dipartimento di Matematica,  Universit`a degli Studi di Pavia \hfill
\indent \newline\texttt{}
\indent Via Ferrata, 1, 27100 Pavia, Italy}
 \email{{\tt gianpietro.pirola@unipv.it}}

\begin{abstract}
We determine the number of minimal degree covers of odd ramification for a general curve.
\end{abstract}

\maketitle

\section{Introduction}
The Catalan numbers $C_n:=\frac{1}{n+1}{2n \choose n}$ form one of the most ubiquitous sequence in classical combinatorics. Stanley's book \cite{St} lists $66$ different manifestations of these numbers in various counting problems. In the theory of algebraic curves, the Catalan number
$C_n$ counts the covers $C\rightarrow \mathbb P^1$ of \emph{minimal} degree $n+1$ from a general curve $C$ of genus $2n$. Each such cover has simple ramification and its monodromy group equals $S_{n+1}$. By degenerating $C$ to a rational $g$-nodal curve, it was already known to Castelnuovo \cite{C89} that the number of such covers coincides with the degree of the Grassmannian $G(2,n+2)$ in its Pl\"ucker embedding, which is well-known to equal $C_n$.

\vskip 3pt

It has been shown by Guralnick and Magaard \cite{GM} (see also \cite{GS}) that for a general curve $C$ of genus $g>3$, the monodromy group $M_f$ of each cover $f:C\rightarrow \bP^1$ is either the symmetric or the alternating group. For $g\leq 3$ several other groups do occur.
The aim of this paper is to determine the number of covers $f:C\rightarrow \mathbb P^1$ with alternating monodromy having as source a general curve $C$ of genus $g$ and such that $\mbox{deg}(f)$ is minimal (among covers with this property). The most natural case is when the local monodromy   around each branch point is given by a $3$-cycle. We refer to $f$ as being an \emph{odd cover}. A moduli count indicates that $f$ has $3g$ branch points and that $\mbox{deg}(f)=2g+1$.    Writing $D=2(x_1+\cdots+x_{3g})$ for the ramification divisor of $f$, from the Hurwitz formula it follows that $\vartheta:=\mathcal{O}_C(D)\otimes f^*\mathcal{O}_{\mathbb P^1}(-1)$ is a theta characteristic on $C$.  These coverings  and their relation with spin structures have already been studied in \cite{S}, \cite{S1} and \cite{F}. We denote by $\mathcal H_g^{\text{odd}}$ the Hurwitz space parametrizing odd covers $f:C\rightarrow \mathbb P^1$ of degree $2g+1$ with local monodromy at each branch point being given by a $3$-cycle. Fried showed in \cite{F} that $\mathcal{H}_g^{\text{odd}}$ has two connected components depending on the parity of $\vartheta$. The forgetful map
$$\varphi: \cH_g^{\text{odd}} \lra \cM_g , \    \mbox{ } \mbox{ }   \  \varphi\bigl([C\to \bP^1]\bigr):=[C]$$ is a map between varieties of  the same dimension $3g-3$.
Using an inductive argument, it is shown in \cite{MV}  that $\varphi$ exists, hence $\varphi$ is generically finite. Our aim is to determine its degree $\mathfrak{A}_g:=\mbox{deg}(\varphi)$. By analogy with the case of the symmetric group, we refer to $\mathfrak{A}_g$ as the $g$th \emph{alternating Catalan number}.

\vskip 3pt

\begin{thm}\label{main}
The number of odd covers of degree $2g+1$ of a general curve of genus $g\ge 3$ equals
$$\mathfrak{A}_g= 16^g \sum_{i=0}^g (-2)^i {g\choose i} C_{2g-i}.$$
\end{thm}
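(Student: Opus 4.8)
The plan is to compute $\mathfrak{A}_g=\deg(\varphi)$ by specializing the general curve $C$ to a totally degenerate stable curve and counting the limiting covers, following the strategy that produced the classical Catalan number in Castelnuovo's count, but now keeping track of the spin structure $\vartheta$. Concretely, I would degenerate $C$ to a rational curve $R_0$ with $g$ nodes, obtained from $\mathbb{P}^1$ by gluing $g$ pairs of points $\{p_i,q_i\}$, and work inside the Harris--Mumford compactification of $\mathcal{H}_g^{\mathrm{odd}}$ by admissible covers. Over $R_0$ an odd cover degenerates to an admissible cover whose restriction to the normalization $\mathbb{P}^1$ is recorded by a pencil $V\subset H^0(\mathbb{P}^1,\mathcal{O}(2g+1))=\mathbb{C}^{2g+2}$, i.e.\ a point of the Grassmannian $G(2,2g+2)$, together with discrete gluing and theta-characteristic data at the $g$ nodes. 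The first step is thus to establish this degeneration and to prove that $\mathfrak{A}_g$ equals the resulting weighted count of admissible covers over $R_0$, transporting the entire problem into Schubert calculus on $G(2,2g+2)$.

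The second step is the intersection-theoretic count. Since $\dim G(2,2g+2)=4g$, a zero-dimensional count requires $4g$ conditions: $g$ of them come from the requirement that $V$ descend across the nodes, each a codimension-one Schubert condition of class $\sigma_1$, while the remaining $3g$ come from the prescribed triple-ramification profile at the $3g$ branch points. I expect the generic stratum, where all $g$ nodes are in general position, to contribute the top self-intersection on $G(2,2g+2)$, which evaluates to its Plücker degree $\deg G(2,2g+2)=C_{2g}$. The nodes in special position---where a branch of the triple ramification is absorbed into a node---force the effective linear system to drop in degree, replacing the ambient Grassmannian by $G(2,2g+2-i)$ when $i$ nodes are special and thereby contributing $\deg G(2,2g+2-i)=C_{2g-i}$. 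Summing over the $\binom{g}{i}$ choices of the special nodes, with a local contribution $-2$ per special node and an overall spin factor $16$ per node, should assemble into
\begin{equation*}
\mathfrak{A}_g=16^{g}\sum_{i=0}^{g}(-2)^i\binom{g}{i}C_{2g-i};
\end{equation*}
equivalently, writing $C(x)=\sum_n C_n x^n$ one has the transparent reformulation $\mathfrak{A}_g=16^{g}\,[x^{2g}]\,C(x)(1-2x)^g$, which makes the bookkeeping of the alternating sum immediate.

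The main obstacle will be the enumerativity of the degeneration together with the local multiplicity analysis at the nodes. One must show that no covers are lost to or created in the boundary, that the relevant intersection on $G(2,2g+2)$ is dimensionally transverse so that each admissible cover is counted with its correct multiplicity, and---most delicately---that the local model of a triple-ramification point colliding with a node contributes exactly the weight $-2$, while the spin gluing at each node contributes exactly the factor $16$. This demands a careful smoothing analysis of the admissible cover in a neighbourhood of such a node and a verification that the theta-characteristic structure specializes correctly to $R_0$ across \emph{both} connected components of $\mathcal{H}_g^{\mathrm{odd}}$, so that the two parities are counted together. Once these local contributions are pinned down, the identification of the Schubert numbers with the Catalan numbers $C_{2g-i}$ and the final summation are routine.
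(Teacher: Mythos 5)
Your overall strategy (degenerate, transport the count to Schubert calculus on $G(2,2g+2)$, expand in Catalan numbers) has the right shape, but the specific degeneration you choose and, more importantly, the local inputs you feed into it are where the proof actually lives, and those are missing. The paper does not degenerate to an irreducible $g$-nodal rational curve; it degenerates to a flag curve $R\cup E_1\cup\cdots\cup E_g$ with $g$ general elliptic tails. This choice is not cosmetic: a Hurwitz-formula analysis forces exactly three of the $3g$ triple ramification points onto each tail, so the restriction of an admissible cover to $E_j$ is either a degree $4$ odd cover triply ramified at the attaching point $Q_j$ or a degree $5$ cover totally ramified at $Q_j$, while the spine contributes the Schubert conditions $\sigma_{3,1}$ resp.\ $\sigma_{4,0}$ at $Q_j$ with respect to the \emph{osculating} flags of the rational normal curve --- precisely the flags for which dimensional transversality and reducedness are available in the literature. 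The answer then takes the form $\mathfrak{A}_g=\bigl(N_4\,\sigma_{4,0}+N_5\,\sigma_{3,1}\bigr)^g$, where $N_4$ and $N_5$ count odd covers of degree $4$ and $5$ on a general pointed elliptic curve.

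The genuine gap in your proposal is that the two numbers you would need --- the weight $-2$ for a ramification point colliding with a node and the factor $16$ per node from ``spin gluing'' --- are not derived but reverse-engineered from the target formula. In the paper the factor $16^g$ is not a gluing factor at all: it is $N_4=N_5=16$, and establishing this occupies two entire sections (an upper bound via a Chern class computation on a projective bundle over $E$, plus a lower bound via explicit elliptic functions and a monodromy argument over the spin moduli space $\mathcal{S}_{1,1}^{+}$; alternatively a delicate admissible-cover count over the nodal cubic with ramification multiplicities computed from the local rings of the Hurwitz space). Likewise the alternating sum $\sum_i(-2)^i\binom{g}{i}C_{2g-i}$ does not arise from an inclusion--exclusion over ``special nodes'' but from pure Schubert calculus: $\sigma_4+\sigma_{3,1}=\sigma_1\sigma_3$ with $\sigma_3=2\sigma_1\sigma_2-\sigma_1^3$, expanded via the known evaluation of $\sigma_1^{2m}\sigma_2^{2g-2m}$ in Catalan numbers. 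A negative local multiplicity in an enumerative degeneration is exactly the kind of thing that cannot be posited and must be computed; without an argument producing the $-2$ and the $16$, and without a transversality statement for the conditions at the $2g$ glued points of your $g$-nodal model (which are not osculating-flag conditions), your outline does not yet constitute a proof.
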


Unlike the classical Catalan numbers, their alternating counterparts $\mathfrak{A}_g$ do not admit a  closed formula. Instead, we determine their generating series.

\begin{thm}\label{maingen}
The generating series of the alternating Catalan numbers is the algebraic function
$$\sum_{g\geq 0} \mathfrak{A}_g w^{2g+1}=\frac{2w}{\sqrt{1+64w^2+16w\sqrt{16w^2+1}}+\sqrt{1+64w^2-16w\sqrt{16w^2+1}}}.$$
\end{thm}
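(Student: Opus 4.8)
The plan is to start from the explicit formula of Theorem \ref{main} and compute the generating series by a diagonal (contour-integral) extraction, using the Catalan generating function
$$C(x)=\sum_{n\ge 0}C_n x^n=\frac{1-\sqrt{1-4x}}{2x},\qquad xC(x)=\frac{1-\sqrt{1-4x}}{2}.$$
The first step is to collapse the inner binomial sum into a single coefficient. Since $C_{2g-i}=[x^{2g}]\bigl(x^iC(x)\bigr)$ and $\sum_{i=0}^g\binom{g}{i}(-2x)^i=(1-2x)^g$, one obtains
$$\sum_{i=0}^g(-2)^i\binom{g}{i}C_{2g-i}=[x^{2g}]\bigl(C(x)(1-2x)^g\bigr),\qquad\text{hence}\qquad \mathfrak{A}_g=16^g\,[x^{2g}]\bigl(C(x)(1-2x)^g\bigr).$$

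Next I would assemble the generating series. Writing $t=16w^2$, one has $\sum_{g\ge 0}\mathfrak{A}_g w^{2g}=\sum_{g\ge 0}t^g[x^{2g}]\bigl(C(x)(1-2x)^g\bigr)$. Representing $[x^{2g}]$ as $\frac{1}{2\pi i}\oint_{|x|=\rho}(\cdots)x^{-2g-1}\,dx$ for a fixed $\rho\in(0,\tfrac14)$, and summing the geometric series $\sum_{g\ge 0}\bigl(t(1-2x)/x^2\bigr)^g=\bigl(1-t(1-2x)/x^2\bigr)^{-1}$, which converges on $|x|=\rho$ as soon as $|t|<\rho^2/(1+2\rho)$, the sum telescopes into
$$\sum_{g\ge 0}\mathfrak{A}_g w^{2g}=\frac{1}{2\pi i}\oint_{|x|=\rho}\frac{x\,C(x)}{x^2+2tx-t}\,dx.$$

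The evaluation is then a residue computation. The denominator has roots $x_\pm=-t\pm\sqrt{t^2+t}$, which for $t$ small both satisfy $|x_\pm|\approx\sqrt{t}<\rho$, hence both lie inside the contour, while $C(x)$ stays analytic there; there is no pole at $x=0$ since the denominator equals $-t\neq 0$ there. Summing the two simple residues, with $q'(x_\pm)=2(x_\pm+t)=\pm 2\sqrt{t^2+t}$ and $x_\pm C(x_\pm)=\tfrac12\bigl(1-\sqrt{1-4x_\pm}\bigr)$, and using $1-4x_\pm=1+4t\mp 4\sqrt{t^2+t}$, produces
$$\sum_{g\ge 0}\mathfrak{A}_g w^{2g}=\frac{\sqrt{1+4t+4\sqrt{t^2+t}}-\sqrt{1+4t-4\sqrt{t^2+t}}}{4\sqrt{t^2+t}}.$$

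Finally I would substitute $t=16w^2$, so that $4t=64w^2$, $\sqrt{t^2+t}=4w\sqrt{16w^2+1}$ and $4\sqrt{t^2+t}=16w\sqrt{16w^2+1}$, multiply by $w$ to pass from $\sum\mathfrak{A}_g w^{2g}$ to $\sum\mathfrak{A}_g w^{2g+1}$, and rationalize. Writing $P=\sqrt{1+64w^2+16w\sqrt{16w^2+1}}$ and $Q=\sqrt{1+64w^2-16w\sqrt{16w^2+1}}$, the identity $P^2-Q^2=32w\sqrt{16w^2+1}$ turns $\frac{P-Q}{16\sqrt{16w^2+1}}$ into $\frac{2w}{P+Q}$, which is exactly the claimed expression. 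The main obstacle is the justification of the term-by-term integration together with the correct bookkeeping of the two poles and the square-root branches (equivalently, verifying that the right-hand side is genuinely an even power series in $w$ times $w$); this can be confirmed independently by matching the first coefficients, e.g.\ $\mathfrak{A}_0=1$, $\mathfrak{A}_1=0$ and $\mathfrak{A}_2=512$.
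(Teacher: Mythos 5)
Your argument is correct, and it reaches the stated formula by a genuinely different route from the paper. The paper first rewrites $C_n=(-1)^n2^{2n+1}{\frac{1}{2}\choose n+1}$ so that $\mathfrak{A}_g=[z^{2g+1}]\,2^{8g+1}(1+\frac{z}{2})^g(1+z)^{1/2}$, then invokes the Lagrange Inversion Theorem with $\phi(z)=16(1+\frac{z}{2})^{1/2}$ and $\psi(z)=\frac{1}{8}(1+z)^{1/2}(1+\frac{z}{2})^{-1/2}$, solves $u=w\phi(u)$ explicitly, and finally extracts the odd part $h(w)=\frac{1}{2}(f(w)-f(-w))$. You instead package the inner sum as $[x^{2g}]\bigl(C(x)(1-2x)^g\bigr)$ via the Catalan generating function and perform a diagonal extraction by a Cauchy contour integral, summing the geometric series under the integral sign and evaluating two simple residues at $x_{\pm}=-t\pm\sqrt{t^2+t}$. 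I checked the details: the convergence condition $|t|<\rho^2/(1+2\rho)$ legitimizes the interchange, for small $t$ both roots satisfy $|x_\pm|=O(\sqrt{t})<\rho$ and lie inside the disk $|x|<\frac14$ where the principal branch of $\sqrt{1-4x}$ is in force (note $1+4t-4\sqrt{t^2+t}>0$ for $t<\frac18$), there is no pole at the origin, and the substitution $t=16w^2$ followed by the rationalization $P-Q=(P^2-Q^2)/(P+Q)=32w\sqrt{16w^2+1}/(P+Q)$ gives exactly the claimed $2w/(P+Q)$; the numerical checks $\mathfrak{A}_0=1$, $\mathfrak{A}_1=0$, $\mathfrak{A}_2=512$ agree with Theorem \ref{main}. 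What each approach buys: yours is self-contained (only Cauchy's formula and $C(x)=\frac{1-\sqrt{1-4x}}{2x}$ are needed), produces $\sum_g\mathfrak{A}_gw^{2g}$ directly as a function of $t=16w^2$ so that evenness is automatic rather than enforced afterwards by taking the odd part, and requires only the bookkeeping of two residues; the paper's route avoids contour integration entirely and delegates the analytic work to the Lagrange inversion formula, at the price of an auxiliary change of variable and the final symmetrization. The two are of course cousins, since Lagrange inversion is itself a residue identity, but your presentation is a legitimate independent proof.
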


An elementary calculation shows that the convergence radius of the algebraic function appearing in Theorem \ref{maingen} equals $\frac{\sqrt{2}}{16}$. Using \cite{FS} Theorem IV.7, we can thus determine the exponential growth rate of the numbers $\mathfrak{A}_g$ and we have
$$\mathfrak{A}_g=\Bigl(\frac{16}{\sqrt{2}}\Bigr)^{2g+1}\chi(g), \ \ \ \mbox{ where } \ \ \ \limsup_{g\rightarrow \infty}\sqrt[2g+1]{\chi(g)}=1.$$

\vskip 3pt
 
The proof of Theorem \ref{maingen} relies on applying the Lagrange Inversion Theorem to the series of expressions computed in Theorem \ref{main}. Theorem \ref{main} is proved by degenerating  a general curve of genus $g$ to a flag curve $C$ consisting of a smooth rational spine having $g$ elliptic tails attached to general points of the spine. One can explicitly exhibit all odd admissible covers of degree $2g+1$ having a source stably equivalent to $C$.  This is carried out in Section $3$. The formula appearing in Theorem \ref{main} depends on two initial values $N_4$ and $N_5$ which are related to the existence of certain odd maps of degrees $4$ and $5$ respectively on a general pointed elliptic curve $[E,P]\in \cM_{1, 1}$. Precisely $N_5$ counts the  covers
$E\stackrel{5:1}\rightarrow \bP^1$, which are totally ramified at $P$ and have three further triple ramification points. Similarly, $N_4$ is the number of degree $4$ covers $E \stackrel{4:1}\rightarrow \mathbb P^1$ having triple ramification at $P$ and at three further unassigned points. A significant part of the paper is devoted to proving that $N_4=N_5=16$, see Theorems \ref{main_section4} and \ref{main_section5}. We present two independent proofs of this fact. The first, uses the theory of elliptic functions. Inspired by  a method from  \cite{AP} we study the existence of such odd maps by counting the solutions of a certain differential equation on an elliptic curve. A Chern class calculation shows that $N_4,N_5 \le 16$. Then we show for a particular elliptic curve that the number of solutions is exactly $16$. The second proof of the equality $N_4=N_5=16$, is carried out in Section \ref{sec:deg}, see Theorems \ref{N5:attempt2} and \ref{N4:attempt2} respectively. It relies on degeneration to a nodal elliptic curve and involves rather subtle intersection-theoretic calculations on moduli stacks of odd admissible covers.

\vskip 3pt

Soon after the appearance of this paper, Lian's related work \cite{Li} was posted on \emph{arXiv}. He considers more general enumerative problems for pencils on curves than we do, though in the specific situation described in this paper his results are less explicit.

\vskip 3pt

\noindent {\bf Acknowledgments:} We are grateful to both M. Fried and D. Oprea for very useful discussions related to this circle of ideas.

\vskip 3pt

{\small{Moschetti is member of GNSAGA (INDAM) and is partially supported by MIUR: Dipartimenti di Eccellenza Program (2018-2022)-Dept. of Math. Univ. of Pavia; Naranjo was partially supported by the Proyecto de Investigaci\'on MTM2015-65361-P; Pirola is member of GNSAGA (INDAM) and is partially supported by PRIN Project \emph{Moduli spaces and Lie theory} (2017) and by MIUR: Dipartimenti di Eccellenza Program (2018-2022) - Dept. of Math. Univ. of Pavia. Farkas was supported by the DFG Grant \emph{Syzygien und Moduli}}}.

\section{Preliminaries}
We collect a few things that will be used throughout the paper.

\subsection{Monodromy of coverings and Hurwitz spaces of odd covers}

Let $f:C\rightarrow \mathbb P^1$ be a finite cover of degree $d$ and denote by $B:=\{P_1, \ldots, P_n\}$ its branch locus. For a point $Q\in \mathbb P^1\setminus B$, let $$\rho_f:\pi_1(\mathbb P^1\setminus B, Q)\rightarrow S_d$$ be its monodromy representation. We denote by $M_f:=\mbox{Im}(\rho_f)$ the monodromy group of $f$. The local monodromy of $f$ around a branch point $P_i\in B$ is given by $\tau_i:=\rho_f([\gamma_i])\in S_d$, where $\gamma_i$ is a simple loop around $P_i$ based at $Q$. The cover $f$ is said to be \emph{alternating} if $M_f\subseteq A_d$. We shall often consider alternating covers $f:C\rightarrow \mathbb P^1$, such that each local monodromy  $\tau_i$ is given by an odd cycle. We refer to such an $f$ as being an \emph{odd cover}.

\vskip 3pt

We denote by $\mathcal{H}_g^{\mathrm{odd}}$ the Hurwitz space parametrizing odd covers $f:C\rightarrow \mathbb P^1$ of degree $2g+1$ branched at $3g$ points. We require that the  local monodromy around each branch point of $f$ be given by a $3$-cycle. Such a cover is endowed with a theta characteristics  $\vartheta:=\mathcal{O}_C(D)\otimes f^*(\mathcal O_{\mathbb P^1}(-1))$, where $D$ is the half of the ramification divisor $R_f$.
As proved by Mumford (see \cite {Mu}), the parity of the spin structure $\vartheta$ is a deformation invariant. Two odd covers $f_1:C_1\rightarrow \bP^1$ and $f_2:C_2\rightarrow \bP^1$ are identified as points in $\mathcal{H}_g^{\mathrm{ord}}$ when there exists an isomorphism $\tau:C_1\rightarrow C_2$ and an automorphism $\bar{\tau}:\bP^1\rightarrow \bP^1$ such that $f_2\circ \tau=\bar{\tau}\circ f_1$. We denote by $[f:C\rightarrow \bP^1]\in \mathcal{H}_g^{\mathrm{ord}}$ the moduli point of the cover $f$.

\vskip 3pt

Let $\hh_g$ be the compactification of $\mathcal{H}_g^{\mathrm{odd}}$ by admissible $A_{2g+1}$-covers. By \cite{ACV}, the stack $\hh_g$ is isomorphic to the stack of \emph{balanced
 twisted stable} maps into the classifying stack $\mathcal{B} A_{2g+1}$,  that is,
$$\hh_g:=\overline{\mathcal{M}}_{0,3g}\Bigl(\mathcal{B} A_{2g+1}\Bigr)/S_{3g},$$
where the action of the symmetric group $S_{3g}$ is given by permuting the branch points. For details concerning the construction of the space of admissible covers we refer to \cite{ACV}. Note
that $\hh_g$ is the normalization of the space $\mathcal{HM}_g^{\mathrm{odd}}$ of  Harris-Mumford admissible covers introduced in \cite{HM}.

Points of $\hh_g$ are odd admissible coverings
$[f:X\rightarrow \Gamma, P_1+\cdots+P_{3g}]$, where $X$ and $\Gamma$ are nodal curves of genus $g$ and $0$ respectively, $f$ is a finite map of degree $2g+1$ with $f^{-1}(\Gamma_{\mathrm{sing}})=X_{\mathrm{sing}}$ and $P_1, \ldots, P_{3g}\in  \Gamma_{\mathrm{reg}}$
are the branch points of $f$. The local monodromy of $f$ around $P_i\in \Gamma$ is given by a $3$-cycle  $\tau_i\in A_{2g+1}$, for $i=1, \ldots, 3g$.
The local monodromy of $f$ at both branches of $X$ at a node $p\in X_{\mathrm{sing}}$ is given by an alternate permutation, which is not necessarily a $3$-cycle. We denote by $$\varphi:\hh_g\rightarrow \mm_g$$ the map associating to an admissible cover $[f:X\rightarrow \Gamma, P_1+\cdots+P_{3g}]$ the stable model $\mbox{st}(X)$ of its source. As discussed in the Introduction, $\varphi$ is a generically finite map.

\vskip 4pt

We discuss the local structure of the space of admissible covers following \cite{HM} p.62.  We fix a point $\xi:=[f:X\rightarrow \Gamma, \ P_1+\cdots+P_{3g}]$ as above and assume $\Gamma_{\mathrm{sing}}=\{u_1, \ldots, u_r\}$. For $i=1, \ldots, r$, set $f^{-1}(u_i)=\{Q_{i,1},\ldots, Q_{i,\ell_i}\}\subseteq X_{\mathrm{sing}}$. The (non-normalized) space
$\mathcal{HM}_{g}^{\mathrm{odd}}                                                                                                                  $ is described by its local ring
\begin{equation}\label{localring}
\hat{\cO}_{\xi, \mathcal{HM}_g^{\mathrm{odd}}}=\mathbb C
\bigl[\bigl[t_1, \ldots, t_{3g-3}, s_{i,1}, \ldots, s_{i, \ell_i}, \ \ i=1, \ldots, r\bigr]\bigr]/s_{i,1}^{\mu_{i,1}}=\cdots=s_{i,\ell_i}^{\mu_{i,\ell_i}}=t_i, \ \ i=1, \ldots, r,
\end{equation}
where $t_i$ is the local corresponding to smoothing the node $u_i\in \Gamma$ and $(\mu_{i,1}, \ldots, \mu_{i,\ell_i})$ describes the ramification profile of $f^{-1}(u_i)$. In particular, $\mathcal{HM}_g^{\mathrm{odd}}$ (and hence $\hh_g$)
is smooth at $t$ whenever over each node $u_i$ with $i=1, \ldots, r$ there exist at most one ramification point, that is, at most one index $j\in \{1, \ldots, \ell_i\}$ with $\mu_{i,j}>1$.

\subsection{Schubert cycles with respect to osculating flags to rational normal curves} We recall the definition of Schubert cycles in the Grassmannian of lines $\mathbb G:=G(2,V)$, where $V\cong \mathbb C^n$. After choosing a flag
$F_{\bullet}: V=V_n\supset V_{n-1}\supset \ldots \supset V_0=0$, for a decreasing sequence of positive integers $\mu:=(\alpha_1\geq \alpha_0)$ we introduce the Schubert cycle
$$\sigma_{\mu}=\sigma_{\mu}(F_{\bullet}):=\bigl\{\Lambda\in \mathbb G: \Lambda\subseteq V_{n-\alpha_0}, \ \Lambda\cap V_{n-\alpha_1-1}\neq 0\bigr\}.$$
When the meaning of the flag $F_{\bullet}$ is clear from the context, we shall drop it from the notation of the corresponding Schubert cycle.
Note that $\mbox{codim}(\sigma_{\mu}, \mathbb G)=|\mu|=\alpha_0+\alpha_1$.

\vskip 3pt

When counting admissible covers we often use non-generic flags defined in terms of a rational normal curve $R\subseteq \mathbb P^{n-1}$ embedded by $V:=H^0(\mathbb P^1, \mathcal{O}_{\mathbb P^1}(n-1))$. For a point $P\in R$, let $F_{\bullet}(P)$ be the osculating flag of $R$ at $P$, thus $V_i:=H^0\Bigl(\mathbb P^1, \cO_{\bP^1}(n-1)\bigl(-(n-i)P\bigr)\Bigr)$ for $i=0, \ldots, n-1$. The osculating flags to $R$ enjoy two very desirable transversality properties:
\begin{enumerate}
\item For any number of distinct points $P_1, \ldots, P_s\in R$ and any partitions $\mu_1, \ldots, \mu_s$, the intersection $\cap_{i=1}^s \sigma_{\mu_i}\bigl(F_{\bullet}(P_i)\bigr)$ has the expected dimension $2(n-2)-|\mu_1|-\cdots-|\mu_s|$, see \cite{EH1}, Theorem 2.3.
\item If all the points $P_i$ are in $\bP^1(\mathbb R)$ and $|\mu_1|+\cdots+|\mu_s|=2(n-2)$, then the intersection
$\cap_{i=1}^s \sigma_{\mu_i}\bigl(F_{\bullet}(P_i)\bigr)$ is a \emph{reduced} union of real points, see \cite{MTV}.
\end{enumerate}

\vskip 4pt

\subsection{Odd covers and differential equations on elliptic curves.}\label{subsectdiff}

Let $E$ be a complex elliptic curve and fix a point $P\in E$. We consider the group structure on $E$ having the point $P\in E$ as origin. We have $E\cong \bC/\Lambda$, where $\Lambda$ is a lattice generated by $1$ and $\tau$, where  $\im(\tau)>0$. Let $\pi:\bC\to E$ be the universal covering, so that $\pi(0)=P$ and we denote by $\sigma:E\to E$ the involution fixing  $P$, which can be thought as the involution associated to the hyperelliptic linear series $|2P|$, which induces a map $h:E\to \bP^1$. We write  $D:=P+Q+R+S$ for the ramification divisor of $h$, thus $Q, R$ and $S$  are the point of order two on $E$.
The function $h$ is determined explicitly by the Weierstrass function $\wp$ (see \cite{AMS} and \cite{Lang}), which is given by
$$\wp(z)=\frac{1}{z^2}+\frac{1}{20}g_2 z^2+\frac{1}{28}g_3 z^4+ O(z^6),$$
where $g_2$ and $g_3$ depend on the choice of the lattice $\Lambda$. Consider the image of the half period
$$e_1:=\wp\bigl(\frac{1}{2}\bigr),\ \ e_2:=\wp\bigl(\frac{\tau}{2}\bigr),\ \ e_3:=\wp\bigl(1+\frac{\tau}{2}\bigr).$$

We record  the following relations between $\wp$ and its derivatives
\begin{align}
\wp'(z)^2&=4\wp(z)^3-g_2\wp(z)-g_3=4(\wp(z)-e_1)(\wp(z)-e_2)(\wp(z)-e_3), \label{eqn:propP1}\\
\wp''(z)&=6\wp(z)^2-\frac{1}{2}g_2. \label{eqn:propP2}
\end{align}

The Weierstrass form of $E$ is given by $y^2=4x^3-g_2x-g_3=4(x-e_1)(x-e_2)(x-e_3)$
and the $j$-invariant of $E$ is computed by the well-known formula
$$j_E= 1728 \frac{g_2^3}{g_2^3-27g_3^2}.$$
The field of the rational function $\bC(E)$ is isomorphic to the subfield $\bC(\wp,\wp')$ of the complex meromorphic functions generated by $\wp$ and $\wp'$, see \cite{Lang}.


\vskip 4pt

As described in the Introduction, an odd map $f:E \to \bP^1$ comes equipped with a spin structure $\vartheta=\cO_E(D)\otimes f^*\cO_{\bP^1}(-1)\in \Pic^0(E)$, where $D:=\frac{1}{2} R_f$ is half of the ramification divisor $R_f$ of $f$. Hence there are four possibilities, meaning
$$\vartheta=\cO_E,\ \ \vartheta=\cO_E(P-Q),\ \ \vartheta=\cO_E(P-R),\ \ \vartheta=\cO_E(P-S).$$

We fix a non-trivial holomorphic form $dz$ in the space of holomorphic differentials $H^0(E,\omega_E)$. We may assume that $h$, viewed as a meromorphic function, has a second order pole at $P$ and a second order zero at $Q$. The next proposition allows us to translate the computation of the quantities $N_4$ and $N_5$, essential in proving Theorem \ref{main} into finding the solutions of certain differential equations on $E$.

\begin{prop}\label{equation}
Let $\vartheta \in \mathrm{Pic}^0(E)[2]$. A meromorphic function $f$ corresponds to an odd cover $f:E\rightarrow \bP^1$ with associated spin structure $\vartheta$ if and only if there exists a meromorphic function $s$ on $E$ with
\begin{equation} \label{main_equation}
 df =s^2\omega,
\end{equation}
where $\omega =dz$ if $\vartheta\cong \cO_E$, and $\omega =hdz$ if $\vartheta\cong \cO_E(P-Q)$ respectively.
\end{prop}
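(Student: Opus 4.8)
The plan is to characterize odd covers $f:E\to\bP^1$ via the structure of their ramification divisor and to translate the condition ``$f$ is odd with spin structure $\vartheta$'' into the existence of a meromorphic square root as in \eqref{main_equation}. First I would analyze the differential $df$, which is a meromorphic one-form on $E$. The key observation is that $f$ being an odd cover means every ramification point of $f$ is a triple point, i.e. the local monodromy is a $3$-cycle, so that the ramification divisor $R_f$ is \emph{even}: at each ramification point $f$ has local form $z\mapsto z^3$, contributing an even multiplicity $2$ to $R_f$. Consequently $df$, whose divisor is $R_f-2f^*(\infty)$ (the poles coming from where $f=\infty$, again with even order since those are total ramification points of odd cycle type), has \emph{even divisor}. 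A meromorphic one-form with even divisor is precisely a square $s^2$ times a fixed reference one-form, provided the ``square root'' closes up as a genuine meromorphic function on $E$ rather than merely on an unramified double cover; this is where the spin structure enters.

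Next I would make this precise using the definition $\vartheta=\cO_E(D)\otimes f^*\cO_{\bP^1}(-1)$ with $D=\tfrac12 R_f$. The point is that $\tfrac12\Div(df)=D-f^*(\infty)$ as divisor classes, and $f^*(\infty)\in |f^*\cO_{\bP^1}(1)|$, so the line bundle associated to the half-divisor of $df$ is exactly $\cO_E(D)\otimes f^*\cO_{\bP^1}(-1)=\vartheta$. Thus a global meromorphic function $s$ with $\Div(s)=\tfrac12\Div(df/\omega)$ exists if and only if $\vartheta\cong\cO_E(\tfrac12\Div(\omega^{-1}))$ as elements of $\Pic^0(E)$, which forces the two declared normalizations: taking $\omega=dz$ (which is holomorphic and nowhere vanishing, so $\tfrac12\Div(\omega)=0$) matches $\vartheta\cong\cO_E$, while taking $\omega=h\,dz$ shifts the reference divisor by $\tfrac12\Div(h)=Q-P$ (recall $h$ has a double pole at $P$ and double zero at $Q$), matching $\vartheta\cong\cO_E(P-Q)$. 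The remaining two cases $\vartheta\cong\cO_E(P-R)$ and $\cO_E(P-S)$ are handled identically after replacing $h$ by the analogous function with double zero at $R$ or $S$, which explains why the proposition records only the two representative normalizations.

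For the converse, I would take a meromorphic $s$ satisfying $df=s^2\omega$ and reconstruct the cover. Integrating shows $f$ is a well-defined meromorphic function on $E$ (the periods automatically vanish since $df$ is exact as the differential of the given $f$); the content is that such an $f$ \emph{is} an odd cover with the prescribed spin structure. Here I would argue that because $df=s^2\omega$ has even divisor at every point, the order of vanishing of $f-f(p)$ at any ramification point $p$ is odd, forcing local monodromy by odd cycles, and a dimension or monodromy count (or invocation of the fact that the generic curve has monodromy in $A_d$ or $S_d$ by Guralnick--Magaard) rules out non-$3$-cycle behavior in the relevant cases. The associated spin structure is then read off exactly as above, giving $\vartheta$.

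The main obstacle will be the square-root step: showing that even divisor of $df/\omega$ yields a \emph{single-valued} meromorphic $s$ on $E$ rather than a section of a genuinely nontrivial square-root line bundle. This is controlled entirely by whether the associated $2$-torsion class in $\Pic^0(E)$ is trivial, i.e. by the identity between $\vartheta$ and $\cO_E(\tfrac12\Div(\omega^{-1}))$; verifying that this torsion bookkeeping matches the four spin structures $\cO_E,\cO_E(P-Q),\cO_E(P-R),\cO_E(P-S)$ is the crux, and it is precisely what the choice $\omega=dz$ versus $\omega=h\,dz$ encodes. Once this correspondence of divisor classes is pinned down, the rest is a direct translation between the analytic datum $s$ and the geometric datum $f$.
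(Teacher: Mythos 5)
Your argument is correct and follows essentially the same route as the paper's own proof: both compute $\Div(df)=R_f-2f^*(\infty)$, observe that half of this divisor, shifted by $\tfrac12\Div(\omega)$, represents the class $\vartheta$ (trivial for $\omega=dz$, twisted by $Q-P$ for $\omega=h\,dz$), and conclude that a single-valued square root $s$ exists exactly when that $2$-torsion class vanishes, with the converse given by the same local parity analysis of vanishing orders. The only superfluous element is your appeal to Guralnick--Magaard in the converse direction; the observation that $f-f(p)$ vanishes to odd order wherever $df=s^2\omega$ has even order already forces odd-cycle local monodromy, which is all the paper uses.
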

\begin{proof}
Assume $f:E\rightarrow \bP^1$ is an odd function with trivial spin structure $\vartheta\cong \cO_E$.  Let $A:=f^*(\infty)$ be the divisor of poles of $f$. Since $D-A$ is a principal divisor, there exists a meromorphic function $s\in \mathbb C(E)$  with
$$\Div(s)=D-A.$$
Then, $\Div(s^2)=2D-2A=R_f-2A$, which is precisely the divisor of $d f$. Up to modifying $s$ by a constant, the equation $df =s^2dz$ is satisfied. In the opposite direction, if $f$ satisfies equation (\ref{main_equation}), then by taking local coordinates it is clear that $f$ is odd.

\vskip 3pt

The case when the associated spin structure $\vartheta$ is even is similar. With the same notation we have that $D-A$ is linearly equivalent to $Q-P$, hence there exists $s\in \mathbb C(E)$ with
$$\Div(s)=D-A+P-Q.$$
Therefore, $\Div(s^2h)=R_f-2A$, since $\Div(h)=2Q-2P$. As above, after rescaling we may assume that $df =s^2hdz$. Assuming conversely that $f$ satisfies this equation, a simple local analysis shows that $f$ is an odd function.
\end{proof}

\begin{rem}
Proposition \ref{equation} is valid for odd covers of arbitrary genus. Consider a curve $C$ of genus $g$ and a theta characteristic $\vartheta$ on $C$. Then a cover $f:C\to \bP^1$ of degree $2g+1$ is odd with associated spin structure  $\vartheta$ if and only if there exists a divisor $A$ of degree $2g+1$ such that $df = s^2\in H^0\bigl(C, \omega_C(2A)\bigr)$, for $s\in H^0(C,\vartheta(A))$.

\end{rem}

In Section \ref{diffeq2} we shall study the solutions of the equation (\ref{main_equation}) when $\deg (f)=4$ and $f$ has a triple ramification point at $P$ and when $\deg (f)=5$ and $P$ is a point of total ramification of $f$ respectively.

\vskip 3pt

\section{Odd admissible covers on flag curves of genus $g$}
In this section we apply degeneration methods in order to prove the formula (\ref{degeneration_formula}) below. This is an intermediate step in the proof of the main Theorem (\ref{main}). We recall that for a pencil $\ell\in G^1_d(C)$ on a smooth curve $C$, for a point $P\in C$ we denote by $a^{\ell}(P)=\bigl(a_0^{\ell}(P)<a_1^{\ell}(P)\bigr)$ its vanishing sequence at $P$ and by $\alpha^{\ell}(P)=(\alpha^{\ell}_0(P)=a_0^{\ell}(P), \alpha^{\ell}_1(P)=a^{\ell}_1(P)-1)$ its ramification sequence. We fix a general pointed elliptic curve $[E,P]\in \cM_{1,1}$ and we introduce two loci. Firstly,

$$\gG(E,P):=\Bigl\{\ell\in G^1_4(E):a_1^{\ell}(P)\geq 3, \mbox{ there exist distinct points } P_i \in E\setminus \{P\} \mbox{ with } a_1^{\ell}(P_i)\geq 3, \ i=1,2,3 \Bigr\}.$$
A pencil $\ell\in \gG(E,p)$ corresponds to a cover $f:E\rightarrow \bP^1$ of degree $4$ ramified triply at $P, P_1, P_2, P_3$ and having no further ramification points. Secondly, we define the locus
$$\gE(E,P):=\Bigl\{\ell\in G^1_5(E):a_1^{\ell}(P)\geq 5, \mbox{ there exist distinct points } P_i \in E\setminus \{P\} \mbox{ with } a_1^{\ell}(P_i)\geq 3, \ i=1,2,3 \Bigr\}.$$
A pencil $\ell\in \gE(E,P)$ corresponds to a cover $f:E\rightarrow \bP^1$ totally ramified at $P$, triply ramified at $P_i$ for $i=1,2,3$ and having no further ramification points.

\vskip 3pt

A parameter count yields that both $\gG(E,P)$ and $\gE(E,P)$ are $0$-dimensional and we denote
$$N_4:=|\gG(E,P)| \ \ \mbox{ and } \  \ N_5:= |\gE(E,P)|$$ respectively, their cardinalities. We shall later prove that both $\gG(E,P)$ and $\gE(E,P)$ are reduced, but for now we do not need that.

\vskip 4pt

We fix once and for all a flag curve
$$[C:=R\cup_{Q_1} E_1 \cup \ldots \cup_{Q_g} E_g]\in \mm_g$$ consisting of a smooth rational curve $R$ and $g$ elliptic tails $E_i$ meeting the spine $R$ at the point $Q_j$ for $j=1, \ldots, g$. We require that $[E_j,Q_j]\in \cM_{1,1}$ are general, which in practice means that $E_j$ is not isomorphic to the Fermat cubic. The use of such flag curves in proving the classical Brill-Noether Theorem is well documented, see \cite{EH}.

\begin{thm}\label{hurwitzfibre}
The fibre of the morphism $\varphi:\hh_g\rightarrow \mm_g$ over the point $[C]$ can be described as
$$\varphi^{-1}\bigl([C]\bigr)=\bigcup_{J\subseteq \{1, \ldots,g\}}\Bigl( \prod_{j\in J}\gG(E_j,Q_j)\times \prod_{i\in J^c}\gE(E_i,Q_i)\Bigr) \times
\Bigl(\bigcap_{j\in J} \sigma_{3,1}\bigl(F_{\bullet}(Q_j)\bigr)\cap \bigcap_{i\in J^c} \sigma_{4,0}\bigl(F_{\bullet}(Q_i)\bigr)\Bigr).$$
Futhermore, if the points $Q_i\in R$ are chosen generically, the above cycle is $0$-dimensional and reduced.
\end{thm}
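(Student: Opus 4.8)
The plan is to describe the fibre $\varphi^{-1}([C])$ by analyzing admissible covers $[f:X\to\Gamma]$ whose stable model is $C$, using the degeneration theory of Harris–Mumford together with the Eisenbud–Harris theory of limit linear series on flag curves. Since $C=R\cup_{Q_1}E_1\cup\cdots\cup_{Q_g}E_g$ is a flag curve, the target $\Gamma$ of any admissible cover must degenerate compatibly, so $\Gamma$ acquires a rational component $\bar R$ onto which the spine $R$ maps, together with rational tails meeting $\bar R$ over each node, onto which the elliptic tails $E_j$ map. The cover thus decomposes into: a degree $2g+1$ cover of $\bar R$ by (the admissible model over) $R$, governed by Schubert conditions at the points $Q_j$; and, over each tail, a cover of a rational curve by the elliptic tail $E_j$ whose source must be one of the two low-degree odd covers parametrized by $\gG(E_j,Q_j)$ or $\gE(E_j,Q_j)$.

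The decisive point is the local ramification bookkeeping at the nodes $Q_j$. The aspect of the limit pencil on each elliptic tail is a $\mathfrak g^1_d$ with prescribed high ramification at $Q_j$, and the Hurwitz / Plücker additivity forces exactly the two numerical possibilities. First I would observe that, since the total degree is $2g+1$ and each tail is elliptic, the restriction of the pencil to $E_j$ is compatible (via the gluing over the node) with a pencil on the spine whose ramification at $Q_j$ is complementary. Concretely, either $E_j$ carries a degree-$4$ triple-ramification cover, contributing $a_1(Q_j)\ge 3$ on the tail and forcing the spine aspect to meet the osculating flag in the class $\sigma_{3,1}(F_\bullet(Q_j))$; or $E_j$ carries a degree-$5$ totally-ramified cover, contributing $a_1(Q_j)\ge 5$ and forcing the complementary spine condition $\sigma_{4,0}(F_\bullet(Q_i))$. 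Running over the choice of which tails are of type $4$ and which are of type $5$ produces exactly the union over all subsets $J\subseteq\{1,\dots,g\}$, with $J$ recording the degree-$4$ tails. The Schubert cycles on the spine record precisely the limit-pencil compatibility on $R\cong\bar R$, where the spine carries a rational normal curve and $F_\bullet(Q_j)$ is its osculating flag.

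For the \emph{dimension and reducedness} claim I would argue as follows. On each elliptic tail the factors $\gG(E_j,Q_j)$ and $\gE(E_j,Q_j)$ are already $0$-dimensional of finite cardinality $N_4$ and $N_5$ by the parameter count preceding the statement. It therefore suffices that the spine factor, the intersection $\bigcap_{j\in J}\sigma_{3,1}(F_\bullet(Q_j))\cap\bigcap_{i\in J^c}\sigma_{4,0}(F_\bullet(Q_i))$ in the Grassmannian $G(2,2g+2)$ of pencils on $\bar R$, be $0$-dimensional and reduced for generic $Q_i\in R$. Here the total codimension is $\sum_{j\in J}|{(3,1)}|+\sum_{i\in J^c}|{(4,0)}|=4g=2\bigl((2g+2)-2\bigr)$, which is exactly $\dim G(2,2g+2)$, so the expected dimension is $0$. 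Transversality property (i) of the osculating flags to a rational normal curve — the Eisenbud–Harris result quoted in Section~2.2 — guarantees that for distinct points $Q_i$ the intersection has this expected dimension, hence is genuinely $0$-dimensional. For reducedness I would invoke the smoothness criterion (\ref{localring}): the space $\hh_g$ is smooth at a point when over each node there is at most one ramification point of the cover, which holds here because the ramification on each side of each node $Q_j$ is concentrated at a single branch, so the admissible cover deforms unobstructedly and the fibre is reduced.

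\textbf{The main obstacle} I anticipate is justifying rigorously that these are \emph{all} the admissible covers and that the numerical compatibility at each node is forced to be exactly one of the two listed types, with the correct complementary Schubert symbol on the spine. This requires a careful smoothing/degeneration argument — matching the ramification profile $(\mu_{i,1},\dots,\mu_{i,\ell_i})$ over each node $u_i$ with the vanishing sequences of the limit pencil on the two components meeting there, and ruling out covers where extra ramification migrates onto the spine or where a tail carries a different pencil. Controlling this gluing, and verifying that no additional components of the fibre appear in the boundary of $\hh_g$, is where the real work lies; the dimension count and reducedness then follow formally from the transversality of osculating flags and the local-ring description of $\mathcal{HM}_g^{\mathrm{odd}}$.
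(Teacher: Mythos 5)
Your high-level strategy coincides with the paper's, but the proposal leaves unproved the step you yourself flag as ``the main obstacle,'' and that step is where essentially all the content of the theorem lies. The paper settles it with a short Hurwitz-formula bookkeeping on each tail that your write-up only gestures at: if $d_j=\deg(f|_{E_j})$ and $a$ is the ramification index of $f|_{E_j}$ at $Q_j$, then $\deg R_{f|_{E_j}}=2d_j$ forces at least three of the $3g$ odd ramification points onto each $E_j$ (at most one such point gives $d_j\le 1$; exactly two gives $d_j=3$ with three points of total ramification, which is excluded because $E_j$ is not isomorphic to the Fermat cubic --- this is precisely where the generality hypothesis on $[E_j,Q_j]$ enters, and your argument never uses it). Since there are only $3g$ ramification points and $g$ tails, exactly three lie on each tail and none on the spine; then $2d_j=a+5$ with $a\le d_j$ leaves only $(d_j,a)=(4,3)$ or $(5,5)$. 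Moreover the Schubert symbols are not simply ``complementary'': for $j\in J$ the fibre $f_j^{-1}(f_j(Q_j))$ contains a residual point $Q_j'$, forcing an extra degree-one rational component of $X$ over $f(R)$, so that $\deg(f|_R)=2g+1-|J|$; one must twist the resulting pencil $\ell_R$ by base points at the $Q_j$, $j\in J$, to land in $G^1_{2g+1}(R)$, and it is this twist that yields $a^{\ell}(Q_j)\ge(1,4)$, i.e.\ $\sigma_{3,1}\bigl(F_{\bullet}(Q_j)\bigr)$, versus $a^{\ell}(Q_i)\ge(0,5)$, i.e.\ $\sigma_{4,0}\bigl(F_{\bullet}(Q_i)\bigr)$. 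Without carrying this out you cannot rule out tails of other degrees or ramification migrating onto the spine, which is exactly the set-theoretic identity to be proved.

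Your reducedness argument also has a gap. Smoothness of $\hh_g$ at the points of the fibre does follow from the local description (\ref{localring}), since a single ramification point lies over each node of $\Gamma$; but this only identifies the fibre \emph{scheme-theoretically} with the displayed intersection of Schubert cycles --- it does not make that intersection reduced, and fibres of morphisms between smooth varieties are routinely non-reduced. The Eisenbud--Harris transversality (property (i) of Section 2.2) that you invoke controls only the dimension. Reducedness of the Schubert intersection with respect to osculating flags requires the Mukhin--Tarasov--Varchenko theorem (property (ii)), which is the citation the paper uses and your argument is missing.
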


An immediate consequence of Theorem \ref{hurwitzfibre} is the following formula for the alternating Catalan number $\mathfrak{A}_g:=\mbox{deg}(\varphi)$.

\begin{thm} \label{degeneration_formula}
The number of odd coverings of degree $2g+1$ in a generic curve of genus $g\ge 3$ is
$$\mathfrak{A}_g=\Bigl(N_4\ \sigma_{4,0}+N_5\ \sigma_{3,1}\Bigr)^g\in H^{\mathrm{top}}\bigl(G(2,2g+2), \mathbb Z\bigr).$$
\end{thm}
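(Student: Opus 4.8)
The plan is to deduce Theorem~\ref{degeneration_formula} directly from the fibre description in Theorem~\ref{hurwitzfibre} by translating the set-theoretic decomposition of $\varphi^{-1}([C])$ into an intersection-theoretic product. Since $\varphi$ is generically finite of degree $\mathfrak{A}_g$ and the fibre over the flag curve $[C]$ is $0$-dimensional and reduced (for a generic choice of the attaching points $Q_i\in R$), the degree equals the number of points in this fibre. First I would count the points in each piece of the union indexed by subsets $J\subseteq\{1,\ldots,g\}$. For a fixed $J$, the contribution factors as a product: on the elliptic tails indexed by $J$ we pick one of the $N_4$ pencils in $\gG(E_j,Q_j)$, on the tails indexed by $J^c$ we pick one of the $N_5$ pencils in $\gE(E_i,Q_i)$, and on the rational spine $R$ we count points in the Schubert intersection $\bigcap_{j\in J}\sigma_{3,1}(F_{\bullet}(Q_j))\cap\bigcap_{i\in J^c}\sigma_{4,0}(F_{\bullet}(Q_i))$ inside $G(2,2g+2)$.

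The key observation is that the total codimension of the Schubert conditions is exactly $4g=\dim G(2,2g+2)$, since each $\sigma_{3,1}$ and each $\sigma_{4,0}$ has codimension $4$ and there are $g$ of them. Hence for each $J$ this intersection is $0$-dimensional, and by the transversality property (i) of osculating flags to the rational normal curve $R$ (from the Eisenbud--Harris result cited in the preliminaries), the intersection is proper and the number of points equals the intersection number $\bigl(\sigma_{3,1}^{|J|}\cdot\sigma_{4,0}^{g-|J|}\bigr)$ evaluated in $H^{\mathrm{top}}(G(2,2g+2),\mathbb Z)$. Summing the product $N_4^{|J|}\cdot N_5^{g-|J|}$ times this intersection number over all $J$ gives
\begin{equation*}
\mathfrak{A}_g=\sum_{J\subseteq\{1,\ldots,g\}} N_4^{|J|}\,N_5^{|J^c|}\,\bigl(\sigma_{3,1}^{|J|}\cdot\sigma_{4,0}^{g-|J|}\bigr).
\end{equation*}
The right-hand side is precisely the multinomial expansion of $\bigl(N_4\,\sigma_{4,0}+N_5\,\sigma_{3,1}\bigr)^g$ in the cohomology ring, once one pairs the index $j\in J$ with the factor $N_4\,\sigma_{4,0}$ carried on the spine by $\sigma_{3,1}$ and conversely; here the bookkeeping is that a tail in $\gG$ (degree $4$) forces ramification profile $\sigma_{3,1}$ on the spine while a tail in $\gE$ (degree $5$, total ramification) forces $\sigma_{4,0}$, which matches the pairing in the stated formula.

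The main obstacle I anticipate is verifying that the reducedness and $0$-dimensionality in Theorem~\ref{hurwitzfibre} genuinely license the identification of the point count with the cohomological intersection product, rather than merely giving an inequality. This requires knowing that the scheme-theoretic fibre is reduced so that each closed point is counted with multiplicity one, and that the local smoothness criterion for $\hh_g$ recorded in \eqref{localring}—namely that at most one ramification point lies over each node—is satisfied at every point of $\varphi^{-1}([C])$. Granting these, the degeneration formula is a formal consequence; the substantive geometric content has already been front-loaded into Theorem~\ref{hurwitzfibre} and into the transversality of osculating Schubert cycles, so the proof here is essentially an exercise in collecting factors and recognizing the binomial-type expansion. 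The remaining input, that $N_4=N_5=16$, is supplied separately and is not needed for the present statement.
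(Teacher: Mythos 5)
Your proposal is correct and follows essentially the same route as the paper: the paper's proof is exactly the summation over $k=|J|$ of the fibre description in Theorem~\ref{hurwitzfibre}, with the transversality, reducedness and smoothness issues you raise already front-loaded into that theorem. The only wrinkle is the pairing: the fibre description attaches $\sigma_{3,1}$ to the degree-$4$ tails (counted by $N_4$) and $\sigma_{4,0}$ to the degree-$5$ tails (counted by $N_5$), so the expansion literally gives $\bigl(N_4\,\sigma_{3,1}+N_5\,\sigma_{4,0}\bigr)^g$ rather than the displayed $\bigl(N_4\,\sigma_{4,0}+N_5\,\sigma_{3,1}\bigr)^g$ --- a harmless swap (present in the paper as well) since $N_4=N_5=16$, though your sentence attempting to reconcile the two pairings does not quite parse.
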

\begin{proof}
In the statement of Theorem \ref{hurwitzfibre}, we sum over $k:=|J|\leq g$ to obtain
$$\mathfrak{A}_g=\sum_{k=0}^g {g\choose k} N_4^{k} N_5^{g-k} \sigma_{3,1}^k\cdot \sigma_{4,0}^{g-k}=\Bigl(N_4\ \sigma_{4,0}+N_5\  \sigma_{3,1}\Bigr)^g \in H^{\mathrm{top}}\bigl(G(2,2g+2), \mathbb Z\bigr).$$
\end{proof}

\noindent \emph{Proof of Theorem \ref{hurwitzfibre}.} We start with an odd admissible cover $\bigl[f:X\rightarrow \Gamma, P_1+\cdots+P_{3g}\bigr]$ of degree $2g+1$, having as source a nodal curve $X$ stably equivalent to $C$. For $i=1, \ldots, 3g$, we denote by $x_i\in f^{-1}(P_i)$ the unique odd ramification point lying over the branch point $P_i$. We fix an index $j\in \{1, \ldots, g\}$ and consider the restriction $f_j=f_{|E_j}:E_j\rightarrow \bP^1$, where $\bP^1$ is one of the components of $\Gamma$. Let $a=a^{f_j}_1(Q_j)$ be the vanishing index of the point of attachment $Q_j$ and note that away from $Q_j$, the ramification points of $f_j$ are precisely those points $x_i$ with $i=1, \ldots, 3g$ which lie on $E_j$. Set $d_j:=\mbox{deg}(f_j)$.

\vskip 3pt

We claim that at least three of the odd ramification points of $f$ lie on $E_j$. Indeed, assume first that, on the contrary, at most one such point lies on $E_j$.
By the Hurwitz formula, then $2d_j=\mbox{deg}(R_{f_j})=a-1+2=a+1\leq d_j+1$, hence $d_j=1$, which is impossible. If two odd ramification points lie on $E_j\setminus\{Q_j\}$, then by the same reasoning $2d_j=a-1+2+2\leq d_j+3$, hence $d_j=3$. But then $f_j:E_j\rightarrow \bP^1$ is a degree $3$ cover having three total ramification points, which forces $E_j$ to be isomorphic to the Fermat cubic, in particular to have $j$-invariant zero, contradicting the generality of $E_j$. Thus at least three of the points $x_1, \ldots, x_{3g}$ specialize on each tail $E_j$. Since there are precisely $g$ elliptic tails, this implies that
precisely three ramification points lie on each of $E_1, \ldots, E_g$, whereas the spine $R$ contains no ramification points.

\vskip 4pt

Applying once more the Hurwitz formula to $f_j$, we get $2d_j-2=a-1+6\leq d_j+5$. If $d_j=5$, then $a=5$ and the pencil corresponding to $f_j$ belongs to $\gE(E_j, Q_j)$, in particular there are $N_5$ choices for $f_j$. If, on the other hand, $d_j=4$, then $a=3$ and the pencil corresponding to $f_j$ gives rise to a point in $\gG(E_j, Q_j)$. Let $J\subseteq \{1, \ldots, g\}$ be the set of labels for the elliptic curves $E_j$ with $d_j=4$, in which case $J^c$ contains the labels for those elliptic tails $E_j$ having $d_j=5$. Set $|J|=:k\leq g$. For each $j\in J$, writing  $f_j^{-1}(f_j(Q_j))=\{Q_j, Q_j'\}$, it follows that there exists a rational component $R_j'$ of $X$ meeting $E_j$ at $Q_j'$ and such that $f(R_j')=f(R)$. In fact $\mbox{deg}(f_{|R_j'})=1$. It follows that the degree of the restriction $f_R=f_{|R}:R\rightarrow \bP^1$ is then at most $2g+1-|J|=2g+1-k$. Since the ramification indices at $Q_j$ on the two branches of $R$ and $E_j$ must agree for $j=1, \ldots, g$, it follows that $f_R$  corresponds to a pencil $\ell_R\in G^1_d(R)$ having vanishing $a_1^{\ell_R}(Q_i)\geq 5$, for $i\in J^c$ and $a_1^{\ell_R}(Q_j)\geq 3$ for $j\in J$. The Hurwitz formula applied to $f_R$ implies that  $d\geq 2g+1-k$, hence $d=2g+1-k$. Equivalently, the pencil $$\ell:=\ell_R\Bigl(\sum_{j\in J} Q_j\Bigr)\in G^1_{2g+1}(R)$$ obtained from $\ell_R$ by adding base points at all points with labels from $J$ satisfies $a^{\ell}(Q_j)\geq (1,4)$ for $j\in J$ and $a^{\ell}(Q_i)\geq (0,5)$ for all $i\in J^c$.

\vskip 3pt

Write $\ell= \bigl(\cO_{\bP^1}(2g+1), V\bigr)$, for a subspace of sections  $V\in G\bigl(2, H^0(\bP^1, \cO_{\bP^1}(2g+1))\bigr)=G(2,2g+2)$. We regard $R$ as a rational normal curve in $\bP^{2g+1}$ embedded each point by $H^0(R, \cO_R(2g+1))$ and denote by $F_{\bullet}(Q_j)$ the osculating flag at $Q_j$. Then for each vanshing sequence $(a_0<a_1)$ the condition $a^{\ell}(Q_j)\geq (a_0,a_1)$ is equivalent to $V\in \sigma_{a_1-1,a_0}\bigl(F_{\bullet}(Q_j))$, which establishes the Theorem set-theoretically.

\vskip 3pt

Observe that all covers $[f]\in \varphi^{-1}\bigl([C]\bigr)$ correspond to smooth points of $\hh_g$. Indeed the rational target curve $\Gamma=f(X)$ has $g+1$ components, namely $f(R)$ and $f(E_j)$ where $j=1,\ldots, g$. Over each node $f(Q_j)\in \Gamma_{\mathrm{sing}}$ lies
a \emph{single} ramification point, which using the local description (\ref{localring}) implies that $\hh_g$ is smooth at $[f]$. Furthermore, the fibre $\varphi^{-1}\bigl([C]\bigr)$ is \emph{scheme-theoretically} isomorphic to disjoint unions of copies of the intersection of Schubert cycles
$$\Bigl(\bigcap_{j\in J} \sigma_{3,1}\bigl(F_{\bullet}(Q_j)\bigr)\cap \bigcap_{i\in J^c} \sigma_{4,0}\bigl(F_{\bullet}(Q_i)\bigr)\Bigr).$$
Following \cite{MTV} this intersection is transverse when the points $Q_j\in R$ are general, which finishes the proof.

\hfill $\Box$

\section{Counting odd covers of elliptic curves I: an approach via differential equations}\label{diffeq2}

The goal of this Section is to determine the quantities $N_4$ and $N_5$ appearing in Theorem \ref{degeneration_formula}.
Thanks to Proposition \ref{equation}, these two problems can be reformulated in terms of differential equations of type (\ref{main_equation}). The result will follow by combining the upper bound provided by Proposition \ref{prop:upperbound4} and the lower bound provided by Proposition \ref{prop:lowerbound4}.

\subsection{Odd covers of degree $4$ on an elliptic curve.} We will ultimately prove the following result:

\begin{thm} \label{main_section4}
The number $N_4$ of odd maps $f:E\rightarrow \bP^1$ of degree $4$ from a general pointed elliptic curve $[E,P]\in \cM_{1,1}$ is equal to $16$.
\end{thm}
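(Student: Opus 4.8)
The plan is to prove the statement via Proposition~\ref{equation}, translating the counting of odd degree-$4$ maps $f:E\to\bP^1$ into counting solutions of a differential equation, and then matching an upper and a lower bound, both equal to $16$. Since a point $[E,P]\in\cM_{1,1}$ is general, I will fix a concrete Weierstrass model and exploit the explicit $\wp$-calculus of Subsection~\ref{subsectdiff}. The four possible spin structures $\vartheta$ split into the trivial case $\vartheta\cong\cO_E$, giving $\omega=dz$, and the three even cases, giving $\omega=hdz$ up to the symmetry permuting the two-torsion points $Q,R,S$; so it suffices to analyze these two model equations $df=s^2\omega$ and then account for multiplicities.

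\smallskip

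\noindent\emph{Upper bound.} First I would establish $N_4\le 16$ by a Chern-class / parameter count, as the Introduction announces. Concretely, an odd map of degree $4$ with triple ramification at $P$ corresponds by Proposition~\ref{equation} to a meromorphic function $s$ with $df=s^2\omega$, where $f$ has a prescribed pole divisor $A=f^*(\infty)$ of degree $4$. The triple-ramification condition at $P$ forces $s$ to have a prescribed order of vanishing/pole at $P$ dictated by the local analysis, so $s$ lives in a fixed line bundle $\vartheta(A)$ of small degree; writing $df=s^2\omega$ as a section of $\omega_E(2A)$ and imposing that the resulting meromorphic $1$-form be exact (no residues, i.e.\ it integrates to a single-valued $f$) cuts out the solution locus. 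I expect the bound $N_4\le 16$ to emerge either by computing the degree of a zero-dimensional Chern class on the relevant symmetric product or linear system, or by a direct count of the dimension of the space of admissible $s$ against the number of exactness/residue conditions. The residue-vanishing condition is the subtle point and must be handled carefully.

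\smallskip

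\noindent\emph{Lower bound.} To show $N_4\ge 16$ I would pick one explicit, sufficiently symmetric elliptic curve $E$ (for instance the lemniscatic/harmonic curve with $g_3=0$, i.e.\ $j=1728$, which is general in the sense required since it is not the Fermat cubic $j=0$) and count the solutions of $df=s^2\omega$ by hand. Writing $s$ as a rational function in $\wp,\wp'$ and using the relations~(\ref{eqn:propP1}) and~(\ref{eqn:propP2}), the exactness condition becomes a finite system of algebraic equations in finitely many coefficients; I would solve this system explicitly and verify that it has exactly $16$ distinct solutions, summed over the spin structures (with the even cases contributing three times a common count by the $Q,R,S$ symmetry). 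Exhibiting $16$ solutions on one curve, together with the upper bound $N_4\le 16$ valid for every general $E$ and the fact that the count is upper-semicontinuous, then forces $N_4=16$ for the general $[E,P]$ and simultaneously shows the solution scheme $\gG(E,P)$ is reduced.

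\smallskip

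\noindent The main obstacle I anticipate is the exactness (residue-free) condition: translating ``$df=s^2\omega$ has a single-valued primitive $f$'' into clean algebraic equations requires pinning down the local behaviour of $s$ at $P$ and at the two-torsion points and controlling residues of $s^2\omega$, and it is here that the factor $16$ must be seen to arise rather than some smaller number. A secondary difficulty is ensuring that the explicitly chosen curve is generic enough that no solutions collide or acquire extra ramification, so that the $16$ solutions found are honestly distinct and reduced; handling this cleanly is what makes the explicit model curve (rather than a formal generic one) the right tool.
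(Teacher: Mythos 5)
Your overall architecture (reduce to the differential equation of Proposition \ref{equation}, prove an upper bound of $16$ by a Chern class computation, and a matching lower bound on the lemniscatic curve $j=1728$) is exactly the skeleton of the paper's first proof, but both halves have genuine gaps as written. For the upper bound, your setup misses the essential complication of the degree-$4$ case: the pole divisor is $A=3P+x$ with $x\in E$ \emph{moving}, so $s$ does not live in a fixed linear system. The paper globalizes over $x$ by the pushforward bundles $D_{m,A}=\pi_{1*}\mathcal{O}_{E\times E}(m\Delta+E_A)$ on $E$ (Lemma \ref{lem:c1Dma}), forms the three-dimensional projective bundle $\bP(\cF^\vee)\to E$ with $\cF=D_{1,2P}$, and computes $c_3\bigl(q^*\cV(2)\bigr)=4$ for the rank-$3$ quotient $\cV=\cU/\cG$, giving $4$ per spin structure. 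Your ``exactness/residue'' worry is resolved there not by imposing residue conditions but by intersecting $\mathrm{Im}(\alpha_x)$ with $\mathrm{Im}(\delta_x)$, where $\delta_x:H^0(\cO_E(3P+x))\to H^0(\cO_E(4P+2x))$ is differentiation: $s^2\omega$ is exact with the right pole behaviour precisely when it lies in $\mathrm{Im}(\delta_x)$. Without identifying this mechanism the bound $16$ is only an expectation, not a proof.

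For the lower bound, your plan to solve the algebraic system explicitly and exhibit all $16$ solutions is a different (and much harder) route than the paper's, and the shortcut you invoke does not work: on a fixed general curve there is no automorphism permuting $Q,R,S$, so ``the even cases contribute three times a common count by the $Q,R,S$ symmetry'' is unjustified for a single $E$. The paper instead argues as follows: existence of at least one solution per spin structure follows from the nonvanishing of the Chern number; on the curve $\tau=i$ the order-$4$ automorphism $j$ (with $j^2=\sigma$) produces four solutions $g,g^j,g^\sigma,g^{j\sigma}$, shown pairwise distinct by an explicit discriminant computation with $\wp$; this settles the trivial and one even spin structure (those fixed by $j$), and the remaining two even spin structures are handled by a monodromy argument using the connectedness of $\cS_{1,1}^+\to\cM_{1,1}$ --- i.e.\ the $Q,R,S$ symmetry is a monodromy symmetry over moduli, not an automorphism of one curve. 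Your concluding semicontinuity step (openness of the locus with $16$ reduced solutions, combined with the universal upper bound) is correct in substance, though the count of distinct solutions is lower, not upper, semicontinuous.
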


We shall use the same terminology of Subsection \ref{subsectdiff}. The origin $P$ of the elliptic curve $E$ may be assumed to be one of the ramification points, hence the odd function $f$ we are looking for belongs to $H^0(E, \cO_E(3P+x))$, for some point $x \in E$.
Assume first  $x\neq P$. By considering the local expression of $f$ around $P$ and $x$ and taking derivatives, we obtain that $df$ has a pole of order $4$ at $P$ and a pole of order $2$ at $x$, that is, $df \in H^0\bigl(E, \cO_E(4P+2x)\bigr)$. Differentiation  provides a linear map
$$\delta_x: H^0\bigl(E, \cO_E(3P+x)\bigr) \to H^0\bigl(E, \omega_E(4P+2x)\bigr)\cong H^0\bigl(E, \cO_E(4P+2x)\bigr).$$
This analysis also works when $x=P$. Then $H^0\bigl(E, \cO_E(4P+2x)\bigr)=H^0\bigl(E, \cO_E(6P)\bigr)$; the function $f$ has a pole of order $4$ at $P$ and so $df$ has there a pole of order $5$ at $P$ and $df \in H^0\bigl(E,\cO_E(5P)\bigr)$, which lies inside $H^0\bigl(E, \cO_E(6P)\bigr)$.
\medskip

Consider now a meromorphic function $s$ satisfying equation (\ref{main_equation}). If the associated theta characteristic $\vartheta$ is trivial, we have $s\in H^0\bigl(E, \cO_E(2P+x)\bigr)$, and we can consider the (non-linear) map
\begin{equation*}
\begin{aligned}
 \alpha_{x}: H^0\bigl(E, \cO_E(2P+x)\bigr) & \to H^0\bigl(E, \cO_E(4P+2x)\bigr) \\
 s & \mapsto s^2 dz.
\end{aligned}
\end{equation*}
If $\vartheta \cong \cO_E(P-Q)$, we consider a similar map $\alpha_x: H^0\bigl(E, \cO_E(P+Q+x)\bigr)\rightarrow H^0\bigl(E, \cO_E(4P+2x)\bigr)$ defined by $\alpha_x(s):=s^2 hdz$. The following corollary relates the maps $\delta_x$ and $\alpha_x$.

\begin{cor} \label{cor:intersection}
The solutions of equation ($\ref{main_equation}$) coincides with the intersection of the images of the maps $\delta _x$ and $\alpha_{x}$, as $x\in E$ varies.
\end{cor}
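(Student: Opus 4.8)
The plan is to unwind the definitions of the two maps $\delta_x$ and $\alpha_x$ and observe that Corollary~\ref{cor:intersection} is essentially a restatement of Proposition~\ref{equation} read through the lens of the codomain $H^0\bigl(E,\cO_E(4P+2x)\bigr)$. First I would fix a point $x\in E$ and recall what each map records. The map $\alpha_x$ has image equal to the set of all differentials of the form $s^2\omega$ (with $\omega=dz$ or $\omega=h\,dz$ according to the parity of $\vartheta$) as $s$ ranges over the appropriate linear series; this is exactly the right-hand side of the defining equation~(\ref{main_equation}). The map $\delta_x$ sends an odd function $f$ with pole divisor bounded by $3P+x$ to its differential $df$; thus the image of $\delta_x$ is precisely the set of exact differentials $df$ arising from such $f$. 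Saying that $df$ lies in the image of $\alpha_x$ is then, tautologically, the statement that $df=s^2\omega$ for some admissible $s$, which by Proposition~\ref{equation} is equivalent to $f$ being an odd cover with spin structure $\vartheta$.

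The key steps, in order, are as follows. I would first argue the inclusion ``solution $\Rightarrow$ point of the intersection'': given $f$ an odd cover with the prescribed ramification, Proposition~\ref{equation} furnishes a meromorphic $s$ with $df=s^2\omega$. The differential $df$ is manifestly in $\im(\delta_x)$ by definition of $\delta_x$, and equal to $\alpha_x(s)$, hence in $\im(\alpha_x)$; therefore $df$ lies in $\im(\delta_x)\cap\im(\alpha_x)$. For the reverse inclusion, suppose $\eta\in\im(\delta_x)\cap\im(\alpha_x)$. Writing $\eta=\delta_x(f)=df$ exhibits $\eta$ as the differential of an odd function $f\in H^0\bigl(E,\cO_E(3P+x)\bigr)$, while writing $\eta=\alpha_x(s)=s^2\omega$ shows that this same $f$ satisfies equation~(\ref{main_equation}). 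Applying the converse direction of Proposition~\ref{equation} then identifies $f$ as a genuine odd cover. One subtle bookkeeping point is that the correspondence is most naturally phrased between solutions $f$ and the exact differentials $df$; I would note that $\delta_x$ is injective up to the constant (the kernel consists only of constant functions, which are not odd covers), so that recovering $f$ from $df=\eta$ is unambiguous in the relevant range.

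The main obstacle I anticipate is not conceptual but one of careful pole-order bookkeeping, namely checking that the two images genuinely live in the \emph{same} target space $H^0\bigl(E,\cO_E(4P+2x)\bigr)$ so that their intersection is meaningful. For $\delta_x$ this requires the local analysis already carried out in the text: if $x\neq P$ then $df$ has a pole of order $4$ at $P$ and order $2$ at $x$, and if $x=P$ one must verify that $df\in H^0\bigl(E,\cO_E(5P)\bigr)\subseteq H^0\bigl(E,\cO_E(6P)\bigr)$. For $\alpha_x$, if $\vartheta\cong\cO_E$ then $s\in H^0\bigl(E,\cO_E(2P+x)\bigr)$ forces $s^2\,dz$ into $H^0\bigl(E,\cO_E(4P+2x)\bigr)$, while if $\vartheta\cong\cO_E(P-Q)$ one uses $\Div(h)=2Q-2P$ to check that multiplication by $h$ keeps $s^2h\,dz$ within the same space. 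Once these compatibility checks are in place, the corollary follows immediately from Proposition~\ref{equation} with essentially no further computation.
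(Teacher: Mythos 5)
Your argument is correct and is essentially the paper's own proof, which simply observes that the corollary follows directly from Proposition \ref{equation} because $\delta_x$ encodes the left-hand side of equation (\ref{main_equation}) and $\alpha_x$ the right-hand side. Your additional bookkeeping (the two inclusions, the pole-order checks placing both images in $H^0\bigl(E,\cO_E(4P+2x)\bigr)$, and the remark that $\ker\delta_x$ consists of constants) just makes explicit what the paper leaves implicit.
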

\begin{proof}
This follows directly from the proof of Proposition \ref{equation}. The map $\delta_x$ correspond to the left hand side of Equation (\ref{main_equation}), while the map $\alpha_x$ correspond to the right hand side.
\end{proof}

In order to exploit this result, notice that the map $\delta_x$ is linear, so in order to intersect its image with the one of $\alpha_x$ it is convenient to look at the kernel of the following composition
$$H^0\bigl(E, \cO_E(2P+x)\bigr) \xrightarrow{\alpha_x} H^0\bigl(E, \cO_E(4P+2x)\bigr) \to \frac {H^0\bigl(E, \cO_E(4P+2x)\bigr)}{\delta_x\bigl(H^0(E, \cO_E(3P+x))\bigr)}.$$

We regard these maps globally by moving the point $x\in E$.
To that end, we consider the projections $\pi_i: E \times E \to E$ for $i=1,2$ and the diagonal $\Delta\subseteq E\times E$.
For a point $x\in E$, define $E_{x}$ as $E\times \{x\}:=\pi_2^*(x)$. For an effective divisor $A=\sum n_i P_i$ on $E$, we set $E_A:=\sum n_i E_{P_i}$.

\begin{defn}
For an integer $m$ and an effective divisor $A$ on $E$,  we define the vector bundle
$D_{m,A}:=\pi_{1*} \bigl(\cO_{E \times E}(m\Delta+ E_A)\bigr)$ on $E$.
\end{defn}

Observe that $\cG:=D_{1,3P}$, $\cF:=D_{1,2P}$ and $\cU:=D_{2,4P}$ are the vector bundles we mentioned before. We have the natural identifications for the respective fibres
$$\cG(x)\cong H^0\bigl(E, \cO_E(3P+x)\bigr), \ \cF(x)\cong H^0\bigl(E,\cO_E(2P+x)\bigr)\ \mbox{ and }\  \cU(x)\cong H^0\bigl(E,\cO_E(4P+2x)\bigr).$$


The map $\delta_x$ correspond to a sheaf morphism $\delta:\cG\rightarrow \cU$: start by considering the differential
$$d: \cO_{E \times E} \to \Omega ^1_{E\times E} = \pi_1^* \cO_E \oplus \pi_2^* \cO_E \cong \cO_{E\times E}^{\oplus 2}.$$
From the effective divisor $A=\sum n_iP_i$, let us define the augmented divisor $A^a:=\sum (n_i+1)P_i$. The differential $d$ induces a map of sheaves on $E\times E$ by just taking derivatives of sections with poles along the divisors $\Delta$ and $E_{P_i}$:
$$\cO_{E\times E} (m \Delta +  E_A) \to \Omega ^1_{E\times E} ((m+1)\Delta + E_{A^a}) \cong (\cO_{E\times E} ((m+1)\Delta + E_{A^a}))^{\oplus 2}.$$
By projecting to the first summand and applying the functor $\pi_{1 *}$ we get the map of sheaves $\delta: \cG\to \cU$, which glues the maps $\delta_x$.

The upper bound given by Proposition \ref{prop:upperbound4} comes from the computation of the Chern classes of the sheaves involved in the picture above. Let us begin this final computation with the following:

\begin{lem} \label{lem:c1Dma}
For all $m\geq 0$ and for all effective divisors $A$,  we have that $c_1(D_{m,A}) = m\cdot \mathrm{deg}(A)$.
\end{lem}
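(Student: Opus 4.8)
The plan is to compute $c_1(D_{m,A})$ by an induction that peels off the two types of divisors defining $\cO_{E\times E}(m\Delta+E_A)$ one at a time, using that all the fibrewise cohomology needed is concentrated in degree $0$. Throughout I will use that for $x\in E$ and $A$ effective with $m+\mathrm{deg}(A)\geq 1$ the line bundle $\cO_E(mx+A)$ has positive degree on the genus-$1$ curve $E$, so $H^1\bigl(E,\cO_E(mx+A)\bigr)=0$; by cohomology and base change this guarantees that $D_{m,A}$ is locally free of rank $m+\mathrm{deg}(A)$, that $R^1\pi_{1*}\cO_{E\times E}(m\Delta+E_A)=0$, and hence that applying $\pi_{1*}$ to the short exact sequences below keeps them exact.

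The base cases are the vanishings $c_1(D_{0,A})=0$ and $c_1(D_{m,0})=0$. For the first, $E_A=\pi_2^*A$, so by the projection formula together with flat base change $D_{0,A}=\pi_{1*}\pi_2^*\cO_E(A)\cong H^0\bigl(E,\cO_E(A)\bigr)\otimes_{\bC}\cO_E$ is a trivial bundle. For the second I will exploit the group law: the automorphism $\phi\colon E\times E\to E\times E$, $\phi(a,b)=(a,b-a)$, satisfies $\pi_1\circ\phi=\pi_1$ and sends $\Delta$ to $E_P$, hence $\phi^*\cO_{E\times E}(mE_P)=\cO_{E\times E}(m\Delta)$ and, $\phi$ being an isomorphism over $E$, $D_{m,0}=\pi_{1*}\cO_{E\times E}(m\Delta)\cong\pi_{1*}\cO_{E\times E}(mE_P)=D_{0,mP}$, which is trivial by the previous computation. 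Thus $c_1(D_{m,0})=0$.

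For the inductive step I fix $m\geq 1$ and add one point $P$ to $A$. Restricting to the fibre $E_P=\pi_2^{-1}(P)$ gives the exact sequence
$$0\longrightarrow \cO_{E\times E}(m\Delta+E_A)\longrightarrow \cO_{E\times E}(m\Delta+E_A+E_P)\longrightarrow \cO_{E\times E}(m\Delta+E_A+E_P)|_{E_P}\longrightarrow 0.$$
Pushing forward and using that $R^1\pi_{1*}$ of the left-hand term vanishes, I obtain $0\to D_{m,A}\to D_{m,A+P}\to \pi_{1*}\bigl(\cO_{E\times E}(m\Delta+E_A+E_P)|_{E_P}\bigr)\to 0$. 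The point is to identify the quotient: since $\pi_1|_{E_P}\colon E_P\to E$ is an isomorphism, $E_P^2=0$, and $E_A|_{E_P}$ is trivial (being pulled back via $\pi_2$), the only nontrivial contribution comes from $\Delta$, whose restriction $m\Delta|_{E_P}$ is the degree-$m$ divisor $m\,(P,P)$; hence the quotient is the line bundle $\cO_E(mP)$ of degree $m$. Additivity of $c_1$ then gives $c_1(D_{m,A+P})=c_1(D_{m,A})+m$, and induction on $\mathrm{deg}(A)$ starting from $c_1(D_{m,0})=0$ yields $c_1(D_{m,A})=m\,\mathrm{deg}(A)$; together with the case $m=0$ this proves the lemma.

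The main obstacle, and the place where the genus-$1$ hypothesis is indispensable, is the bookkeeping around the relative $H^1$: one must ensure the fibrewise degree stays positive so that the pushforwards remain locally free and the sequences stay short exact. The naive restriction-to-$\Delta$ induction breaks exactly at $m=1,\ A=0$, where $H^1(E,\cO_E)\neq 0$, which is why I route the $A=0$ base case through the translation automorphism instead. As a cross-check, the same answer drops out in one line from Grothendieck--Riemann--Roch: the relative tangent bundle of $\pi_1$ is $\pi_2^*T_E\cong\cO_{E\times E}$, so $\td=1$ and $c_1(D_{m,A})=\pi_{1*}\bigl(\tfrac12(m\Delta+E_A)^2\bigr)=m\,\mathrm{deg}(A)$, using $\Delta^2=E_{P_i}^2=0$ and $\Delta\cdot E_{P_i}=1$.
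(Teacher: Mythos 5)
Your proof is correct, and it runs the induction in the transposed direction from the paper's. The paper's main induction peels off copies of $\Delta$ via the restriction sequence $0\to \cO_{E\times E}((m-1)\Delta+E_A)\to\cO_{E\times E}(m\Delta+E_A)\to\cO_\Delta(\Delta+E_A)\to 0$, so each step contributes $\deg(A)$ and the reduction lands on the case $m=0$, which is then handled by a second induction restricting to fibres $E_P$ together with Grothendieck--Verdier duality to dispose of $c_1(R^1\pi_{1*}\cO_{E\times E})$. You instead peel off points of $A$ by restricting to fibres $E_P$, so each step contributes $m$ (the degree of $m\Delta|_{E_P}$), and your reduction lands on the two base cases $D_{0,A}$ and $D_{m,0}$, which you settle more cleanly than the paper: the first is literally a trivial bundle by base change, and the second is pulled back to $D_{0,mP}$ by the translation automorphism $(a,b)\mapsto(a,b-a)$ --- a genuinely different idea with no counterpart in the paper. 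Your route also sidesteps the one delicate spot in the paper's argument, namely the step from $D_{0,0}$ to $D_{1,0}$, where $R^1\pi_{1*}\cO_{E\times E}\cong\cO_E$ does \emph{not} vanish (contrary to what the paper's phrasing suggests for $m=1$); the paper's conclusion survives only because that extra term has vanishing $c_1$, whereas in your setup every sequence you push forward genuinely stays short exact. The concluding Grothendieck--Riemann--Roch computation, using $\Delta^2=E_{P_i}^2=0$ and $\Delta\cdot E_{P_i}=1$ with trivial relative dualizing sheaf, is a valid independent confirmation and arguably the shortest complete argument of all.
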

\begin{proof}
We first check the formula for $n=0$, in which case $A=0$. Consider the short exact sequence
\begin{equation} \label{case_n=0}
0 \longrightarrow \cO_{E \times E}((m-1)\Delta) \longrightarrow \cO_{E \times E}(m\Delta) \longrightarrow \cO_{\Delta}(\Delta) \longrightarrow 0.
\end{equation}

By the adjunction formula $\cO_{\Delta}(\Delta)$ is trivial. If $m\ge 1$, then $R^1 \pi_{1 *} \cO_{E \times E}((m-1)\Delta)=0$ and we get immediately that $c_1(D_{m, 0})=c_1(D_{m-1,0})$. For $m=0$ we have
$$c_1(D_{0,0})=c_1(\pi_{1 *}   \cO_{E \times E})=c_1(\cO_E)=0.$$
Therefore, $c_1(D_{m,0})=0$ for all $m\ge 0$. \medskip

Assume now $n>0$. After tensoring the short exact sequence (\ref{case_n=0}) with $\cO_{E \times E}(E_A)$, we apply the functor $\pi_{1 *}$. Since $n>0$, we have
$R^1\pi_{1*} \cO_{E \times E}\bigl((m-1)\Delta +E_A\bigr)=0$
for any $m\ge 1$, therefore the following sequence is exact
$$ 0 \longrightarrow D_{m-1,n} \longrightarrow D_{m,n} \longrightarrow \pi_{1 *} \cO_{\Delta}(E_A) \longrightarrow 0.$$
The first Chern class of $\pi_{1 *}\cO_{\Delta}(E_A)$ equals $c_1(\cO_{E}(A))=\deg (A)=n$. Then $c_1(D_{m,n}) = n + c_1(D_{m-1,n})$ for $n>0$. We can repeat this procedure until we get
$c_1(D_{m,n}) = m\cdot n + c_1(D_{0,n})$. \medskip

It remains to take care of the case $m=0$ and $n>0$. Let $P$ be a point in the support of $A$ and set $A_0=A\setminus \{P\}$.
Similarly to the previous cases, one finds for $n\ge 2$ the following short exact sequence
$$0 \longrightarrow D_{0,A_0}\longrightarrow D_{0,A} \longrightarrow \pi_{1*}\cO_{E_{P}}(E_{A}) \longrightarrow 0.$$
Since $c_1\bigl(\pi_{1*}\cO_{E_{P}}(E_{A})\bigr) = c_1\bigl(\cO_{P}(A)\bigr) = 0$, one gets
$c_1(D_{0,A}) = c_1(D_{0,A_0})$. To conclude, we need to show that $c_1(D_{0, P})=0$ for any point $P\in E$.
Indeed we have
$$0 \longrightarrow D_{0,0}\longrightarrow D_{0,P} \longrightarrow \pi_{1*}\cO_{E_{P}}(E_{P}) \longrightarrow R^1 \pi_{1 *} \cO_{E\times E} \longrightarrow 0,$$
which gives that $c_1(D_{0,P})=c_1(R^1 \pi_{1 *} \cO_{E\times E} )$. By Grothendieck-Verdier duality $R^1 \pi_{1 *} \cO_{E\times E}\cong \cO_E$ and the result follows.
\end{proof}

\begin{prop} \label{prop:upperbound4}
Equation (\ref{main_equation}) has at most $16$ distinct solutions in degree $4$.
\end{prop}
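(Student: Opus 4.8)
The plan is to turn the set-theoretic description of Corollary \ref{cor:intersection} into the zero scheme of a section of a rank-$3$ bundle on a projective bundle over $E$, and then to read off the bound from a top Chern number computed via Lemma \ref{lem:c1Dma}. Since the map $\alpha_x$ is quadratic rather than linear, the natural move is to projectivize. Fix one of the four spin structures $\vartheta$, with its associated $\omega$ and its bundles $\cF$, $\cG$, $\cU$ and differentiation map $\delta\colon\cG\to\cU$. Let $\pi\colon\mathbb{P}(\cF)\to E$ be the bundle of lines in the fibres of $\cF$, with tautological sub-line-bundle $\mathcal{O}_{\mathbb{P}(\cF)}(-1)\hookrightarrow\pi^*\cF$ and $\xi:=c_1(\mathcal{O}_{\mathbb{P}(\cF)}(1))$. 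A point of $\mathbb{P}(\cF)$ is a pair $(x,\langle s\rangle)$; because $\alpha_x(\lambda s)=\lambda^2\alpha_x(s)$ while $\mathrm{im}(\delta_x)$ is linear, the condition $s^2\omega\in\mathrm{im}(\delta_x)$ is projective, and $s,-s$ yield the same cover. Hence, with $\mathcal{Q}:=\mathrm{coker}(\delta)$, the squaring-and-project operation $s\mapsto s^2\omega \bmod \delta(\cG)$ defines a global section $\sigma$ of $\pi^*\mathcal{Q}\otimes\mathcal{O}_{\mathbb{P}(\cF)}(2)$ whose zero scheme is exactly the set of odd degree-$4$ covers with spin structure $\vartheta$.

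Next I collect the numerical invariants. All the bundles live on the curve $E$, so only rank and degree matter, and Lemma \ref{lem:c1Dma} gives $\deg\cF=2$, $\deg\cG=3$, $\deg\cU=8$, with ranks $3,4,6$. The kernel of $\delta$ is the trivial sub-line-bundle of constant functions, so from the four-term exact sequence $0\to\mathcal{O}_E\to\cG\xrightarrow{\delta}\cU\to\mathcal{Q}\to 0$ the cokernel $\mathcal{Q}$ has rank $3$ and degree $8-3+0=5$.

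Then I compute the top Chern number. Since $\mathcal{Q}$ is pulled back from $E$, one has $c_2(\mathcal{Q})=c_3(\mathcal{Q})=0$ and $(\pi^*c_1(\mathcal{Q}))^2=0$, so the tensor formula gives
$$c_3\bigl(\pi^*\mathcal{Q}\otimes\mathcal{O}_{\mathbb{P}(\cF)}(2)\bigr)=(2\xi)^3+\pi^*c_1(\mathcal{Q})\,(2\xi)^2=8\,\xi^3+4\,\pi^*c_1(\mathcal{Q})\,\xi^2.$$
The Grothendieck relation for the rank-$3$ bundle $\cF$ on $E$ reduces to $\xi^3+\pi^*c_1(\cF)\,\xi^2=0$, and $\pi_*\xi^2=[E]$, whence $\int_{\mathbb{P}(\cF)}\xi^3=-\deg\cF=-2$ and $\int_{\mathbb{P}(\cF)}\pi^*c_1(\mathcal{Q})\,\xi^2=\deg\mathcal{Q}=5$. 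Therefore the number of solutions carrying the fixed spin structure $\vartheta$ is bounded by $8\cdot(-2)+4\cdot 5=4$. The bundles attached to each of the three even spin structures $\cO_E(P-Q),\cO_E(P-R),\cO_E(P-S)$ have the same rank and degree as in the trivial case, so each again contributes at most $4$; summing over the four spin structures produces the bound $16$.

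The main obstacle is to make sure that this top Chern number is an honest upper bound, i.e. that the zero locus of $\sigma$ is genuinely $0$-dimensional, with no excess or positive-dimensional components, so that the number of distinct covers is at most the length $\int c_3$ of the zero scheme. This is exactly where the generality of $[E,P]\in\cM_{1,1}$ must be used: the parameter count showing that $\gG(E,P)$ and $\gE(E,P)$ are finite has to be invoked to exclude excess contributions. A related technical point is to verify that $\delta$ has constant rank $3$, so that $\mathcal{Q}$ is locally free and the Chern-class formula applies verbatim; the special fibre over $x=P$, where the polar orders of $s$ and $f$ degenerate, has to be inspected separately. Once finiteness and local freeness are secured, the displayed computation closes the argument.
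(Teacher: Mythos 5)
Your proposal is correct and follows essentially the same route as the paper: projectivize $\cF$, view $s\mapsto s^2\omega \bmod \delta(\cG)$ as a section of $q^*\cV\otimes\cO(2)$ for the rank-$3$ quotient $\cV=\cU/\delta(\cG)$ of degree $5$, compute $c_3=4\cdot 5+8\cdot(-2)=4$ per spin structure via the Grothendieck relation, and sum over the four spin structures. Your explicit four-term exact sequence for the cokernel and your remark that finiteness of the zero locus (guaranteed by the parameter count for a general $[E,P]$) is needed for the Chern number to be an honest upper bound are points the paper leaves implicit, but the argument is the same.
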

\begin{proof}
By Lemma \ref{lem:c1Dma} we have
\begin{align*}
c_1(\cG)&=c_1(D_{3,1})=3,\\
c_1(\cF)&=c_1(D_{2,1})=2,\\
c_1(\cU)&=c_1(D_{4,2})=8.
\end{align*}
Let us denote the quotient $\cU / \cG$ by $\cV$, we have $c_1(\cV)=c_1(\cU)-c_1(\cG)=5$. By following Grothendieck's notation of \cite{H}, we consider the projective bundle $\bP:= \bP\bigl(\cF ^\vee\bigr) \xrightarrow{q} E$ of $\cF$. Note that $\mbox{dim}(\bP)=3$. Denote the class of the line bundle $\cO_{\bP}(1)$ by $\epsilon$.
The map $f\to f^2\omega dz \ {\rm mod} \ \delta(\cG)$ can be viewed globally as a morphism of vector bundles on $\bP$
$$\phi: \cO_\bP(-2)\to q^\ast \cV,$$ that is,  as an element of $H^0\bigl(\bP,q^\ast \cV(2)\bigr)$.
Recalling that $\epsilon^3-q^\ast c_1(\cF^\vee)\epsilon^2=0$, we find $\epsilon^3=q^\ast c_1(\cF^\vee) \epsilon^2 = - q^\ast c_1(\cF) \epsilon^2 = -c_1(\cF) =-2$.
If we denote by $A,B$ and $C$ the Chern roots of the rank $3$ vector bundles $\cV$, we  use the splitting principle to compute:
 \begin{align*}
 &c_3(q^\ast \cV(2))=(A+2\epsilon)(B+2\epsilon)(C+2\epsilon)\\
 &=ABC+2\epsilon(AB+BC+CA)+4\epsilon^2(A+B+C)+8\epsilon^3\\
 &=4\epsilon^2c_1(\cV)+8\epsilon^3=4\bigl(c_1(\cV)+2\epsilon^3\bigr)=4(5-2\times 2)=4.
\end{align*}
This happens for each of the $4$ spin structures on $E$, thus the equation (\ref{main_equation}) has at most $16$ solutions.
\end{proof}

Now we prove the existence of exactly $16$ solutions of equation (\ref{main_equation}) for a particular elliptic curve.
The argument is independent of a fixed theta characteristic $\vartheta$ on $E$. Let $g$  be a solution of equation (\ref{main_equation}). Remember that we defined  $\sigma: E\to E$ to be  the involution that fixes the origin $P$ of the elliptic curve. The function $g^\sigma:=g\circ \sigma$ is then another odd degree $4$ cover triply ramified at $P$. Recall that $Q,R$ and $S$ denote the non-trivial $2$-torsion points on $E$.

\begin{lem}
The solutions $g$ and $g^\sigma$ are different.
\end{lem}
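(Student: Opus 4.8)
The plan is to show that the two odd covers $g$ and $g^{\sigma}$ cannot coincide by tracking what the symmetry $\sigma$ does to the ramification data of $g$. Since $g$ is a degree $4$ cover triply ramified at the origin $P$, its ramification divisor has the form $R_g = 2(3P + x_1 + x_2 + x_3)$ away from $P$ where, by the analysis preceding Theorem~\ref{main_section4}, the point of attachment forces a triple ramification at $P$ and three further triple ramification points $P_1, P_2, P_3$. Equivalently, $g$ corresponds to a pencil $\ell \in \gG(E,P)$ with $a_1^{\ell}(P) \geq 3$. The precomposition $g^{\sigma} = g \circ \sigma$ is again such a cover, and its ramification points are exactly the images under $\sigma$ of the ramification points of $g$, that is $\sigma(P), \sigma(P_1), \sigma(P_2), \sigma(P_3)$. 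Because $\sigma$ fixes $P$, both $g$ and $g^{\sigma}$ are triply ramified at $P$, so the two covers are genuinely comparable as elements of $\gG(E,P)$.

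First I would suppose for contradiction that $g = g^{\sigma}$ as points of the Hurwitz space, meaning there is an automorphism $\bar{\tau}$ of $\bP^1$ with $g \circ \sigma = \bar{\tau} \circ g$. The key observation is that the unordered set of (non-origin) triple ramification points $\{P_1, P_2, P_3\}$ is an intrinsic invariant of the cover, so equality of the two covers would force $\sigma$ to permute this three-element set: $\sigma(\{P_1,P_2,P_3\}) = \{P_1,P_2,P_3\}$. Next I would exploit the fixed-point structure of $\sigma$. The involution $\sigma$ has exactly four fixed points, namely the origin $P$ and the three nontrivial two-torsion points $Q, R, S$. Since $\sigma$ acts on the three-element set $\{P_1,P_2,P_3\}$, it must fix at least one of them (a permutation of an odd-size set by an involution always has a fixed point), say $\sigma(P_1) = P_1$; but the only $\sigma$-fixed points are $P, Q, R, S$, and $P_1 \neq P$, so $P_1 \in \{Q,R,S\}$ is a two-torsion point.

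The hard part will be ruling out this configuration, i.e. showing that for a \emph{general} $[E,P]$ no solution $g$ can have one of its triple ramification points at a two-torsion point while being $\sigma$-invariant there. Here I would translate the condition back through Proposition~\ref{equation}: the $\sigma$-invariance of $g$ (up to the target automorphism) together with a triple ramification point at a two-torsion point imposes an extra algebraic condition on the pencil that is not satisfied generically, since $\gG(E,P)$ is $0$-dimensional and, as the paper asserts it will later show, reduced. Concretely, if $g^{\sigma}$ and $g$ agreed, then $g$ would be a pullback of a degree-$2$ map under $\sigma$ in a way that collapses the degree, forcing $g$ to factor through the quotient $h \colon E \to \bP^1$ and hence lowering its degree below $4$ — a contradiction. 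I expect the genericity of $[E,P]$ (in particular $E$ not being the Fermat cubic, as used in the proof of Theorem~\ref{hurwitzfibre}) to be exactly what excludes the exceptional coincidence, and the main technical obstacle to be verifying cleanly that no $\sigma$-invariant solution exists rather than merely that $\sigma$ permutes the ramification points nontrivially.
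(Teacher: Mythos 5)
Your opening moves coincide with the paper's: assuming $g$ and $g^{\sigma}$ define the same cover (the paper normalizes this to $g^{\sigma}=\pm g$, which is legitimate because the induced automorphism $\bar{\tau}$ of $\bP^1$ must be an involution fixing $g(P)$), you correctly deduce that $\sigma$ permutes the three remaining triple ramification points and therefore fixes at least one of them, which must then be a nontrivial $2$-torsion point. But your proof stops exactly where the real work begins, and the two substitutes you offer do not close the gap. First, the ``concrete'' contradiction is wrong: if $g^{\sigma}=g$ then $g$ factors as $\tilde{g}\circ h$ with $\deg \tilde{g}=2$, which does \emph{not} lower the degree below $4$; the genuine obstruction in that sub-case is that the ramification index of $g$ at $P$ would be $2$ or $4$, never $3$. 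Moreover that argument only covers $\bar{\tau}=\mathrm{id}$; the essential case $g^{\sigma}=-g$ yields no factorization through $h$ at all. Second, the appeal to genericity of $[E,P]$ cannot work even in principle: this lemma is invoked in Proposition~\ref{prop:lowerbound4} for one \emph{particular} elliptic curve, the one with $\tau=i$, precisely because it carries the automorphism $j$ with $j^2=\sigma$; a statement valid only for general $[E,P]$ would not apply to it. And in any case ``the extra condition is not satisfied generically'' is an assertion, not an argument --- a reduced $0$-dimensional set can perfectly well contain a $\sigma$-invariant point unless one exhibits a curve where it does not.

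What the paper actually does after the fixed-point analysis is the step you leave blank. It pins down both special fibres: the fourth point of $g^{-1}(g(P))$ is also $\sigma$-fixed, giving $g^{*}(\infty)=3P+Q$, and the residual point of the fibre through the fixed ramification point is $\sigma$-fixed as well, giving $g^{*}(0)=3R+S$. This determines $g$ up to scale as the explicit function induced by $G(z)=\wp'(z)\,\bigl(\wp(z)-e_2\bigr)/\bigl(\wp(z)-e_1\bigr)$. Differentiating, the requirement that the remaining ramification of $G$ be triple forces a quadratic in $\wp(z)$ to have vanishing discriminant $\Delta_0(\tau)=10e_1^2-2e_2^2+e_1e_2$, and a direct check at $\tau=i$ (where $g_3=e_1e_2e_3=0$) shows $\Delta_0(i)\neq 0$ in every case, yielding the contradiction. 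Some explicit computation of this kind is unavoidable here, and without it the lemma is not proved.
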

\begin{proof}
Assume $g^\sigma =\pm g$. The unique point in $g^{-1}\bigl(g(P)\bigr)\setminus \{P\}$ is then fixed by $\sigma$ and we may assume this point to be $Q$.
Moreover, $\sigma $ acts on the other triple ramification points of $g$, which we denote by $x, y$ and $z$.
Since $\sigma$ is an involution, there must be at least one fixed point and we may assume $x=R$.
Consider $x'$ to be the remaining point in the fibre of $R$, that is $g^{-1}(g(R))=3R+x'$.
Then $x'$ is also fixed by $\sigma$ and so $x'=S$. Summarizing, we have $g^*(\infty) =3P+Q$ and $g^*(0)=3R+S$.

\vskip 3pt

Let $v_1$, $v_2$ and $v_3$ in $\bC$ be the half periods of $E=\bC/\Lambda$ and set $e_i:=\wp(v_i)$. We consider the equation (\ref{eqn:propP1}) and (\ref{eqn:propP2}) from the preliminaries. In particular, $e_1+e_2+e_3=0$. Recall that $\wp$ has a pole of order $2$ at $0$ and $\wp'$ has a pole of order $3$ at $0$ and on the $v_i$. Consider the function $G:\bC \to \bC$ defined by
$$G(z):= \wp'(z) \frac{\wp(z)-e_2}{\wp(z)-e_1}.$$
The function $G(z)$ has a pole of order $3$ at the points $0$ and $v_1$, and a zero of order $3$ at the point $v_2$. The half period $v_1$ corresponds to the point $Q$ and, similarly, $v_2$ corresponds to $R$. We have that $G(-z)=-G(z)$.
The attached meromorphic function $g_0$ on $E$ satisfies
$$\Div(g_0)=3P+Q-3R-S.$$
Hence up to a constant, $g_0=g$. In particular, $G$ has to be an odd function. To impose this we compute the derivative of $G$, then we study the vanishing locus of its discriminant. To simplify calculations, we set
$$\varphi(z):=\frac{\wp(z)-e_2}{\wp(z)-e_1},$$
in such a way that $G(z)=\varphi(z)\wp'(z)$. We have
$$\varphi'(z)= \wp'(z) \frac{e_2-e_1}{(\wp(z)-e_1)^2}.$$
We proceed with the computation of $G'(z)$ by using the properties of $\wp$ given in equation (\ref{eqn:propP1}):
\begin{equation*}
\begin{aligned}
 G'(z)=& \wp''(z) \varphi(z)+\wp'(z) \varphi'(z)=\wp''(z)\varphi(z)+\wp'(z)^2\frac{e_2-e_1}{(\wp(z)-e_1)^2}\\
=&\wp''(z)\varphi(z)+4(\wp(z)-e_1)(\wp(z)-e_2)(\wp(z)-e_3)\frac{e_2-e_1}{(\wp(z)-e_1)^2}\\
=&\wp''(z)\varphi(z)+\varphi(z)\left(4(\wp(z)-e_3)(e_2-e_1)\right)=\varphi(z)\bigl(\wp''(z)+4(e_2-e_1) (\wp(z)-e_3)\bigr).
\end{aligned}
\end{equation*}
To understand the ramification of $G$, we study the zeroes of
$$\wp''(z)+4(e_2-e_1)(\wp(z)-e_3)=6\wp(z)^2-\frac{1}{2}g_2+ 4(e_2-e_1) (\wp(z)-e_3).$$
Set $v:=\wp(z)$, and recall that from (\ref{eqn:propP2}) that
\begin{equation} \label{eqn:EG}
g_2=-4\left(e_1e_2+e_3(e_1+e_2)\right)=4(e_1^2+e_1e_2+e_2^2).
\end{equation}
By using (\ref{eqn:EG}) and that $e_3=-e_1-e_2$, we finally get the the equation
$$2\left(3v^2+2(e_2-e_1)v+(-3e_1^2-e_1e_2+e_2^2)\right)=0,$$
which has discriminant $\Delta (\tau)=16 \Delta_0(\tau)$, where $\Delta_0(\tau)=10 e_1^2-2e_2^2+e_1 e_2$.
Now we focus on the elliptic curve corresponding to $\tau =i$. For this curve one has $g_3=e_1 e_2 e_3=0$ and $g_2\ne 0$. Therefore, one and only one of the values $e_i$ is zero. If $e_1=0$, then $\Delta_0(i)=-2e_2^2\ne 0$. If $e_2=0$, then $\Delta_0(i)= 10e_1^2\ne 0$. Finally if $e_3=0$, then $e_2=-e_1$ and $\Delta_0(i)= 8e_1^2\ne 0$. We obtain that $g\ne g^\sigma$ as desired.
\end{proof}

Observe that the elliptic curve $E$ we are considering also has an automorphism $j$ such that $j^2=\sigma $. Then $g^j$ is a new function with odd ramification. Moreover, $g^j=g$ would imply
$$g^\sigma =g^{j\circ j}=g^j\circ j=g\circ j=g^j=g,$$
a contradiction. With a similar argument one can prove that $g^j\ne g^\sigma$ and that all the solutions $g$, $g^j$, $g^\sigma$, $g^{j\sigma}$ are different.

\begin{prop} \label{prop:lowerbound4}
Equation (\ref{main_equation}) has at least $16$ distinct solutions in degree $4$.
\end{prop}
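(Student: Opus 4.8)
The plan is to carry out the lower bound on the distinguished curve $E$ with $\tau=i$ fixed in the preceding lemma, for which $\mathrm{Aut}(E,P)=\langle j\rangle\cong\bZ/4$ with $j^2=\sigma$, and to realize the $16$ solutions as a disjoint union of free $\langle j\rangle$-orbits. Two inputs are already in hand. First, the Chern class computation of Proposition \ref{prop:upperbound4} presents the solutions of (\ref{main_equation}) for a fixed spin structure as the zeros of a section $\phi\in H^0\bigl(\bP,q^\ast\cV(2)\bigr)$ of a rank-$3$ bundle on the $3$-fold $\bP=\bP(\cF^\vee)$ with top Chern number $c_3\bigl(q^\ast\cV(2)\bigr)=4\neq 0$; since a nowhere-vanishing section would force this Euler class to vanish, we already know that \emph{each} of the four spin structures admits at least one solution, and (the zero scheme being $0$-dimensional) that its length is exactly $4$. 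Second, the lemma and remark just above show that the $\langle j\rangle$-action on solutions is \emph{free}: no solution $g$ satisfies $g^\sigma=\pm g$, so $g,g^j,g^\sigma,g^{j\sigma}$ are always four distinct solutions.

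The next step is to record how $j$ permutes the four spin structures. For $\tau=i$ the automorphism $j\colon z\mapsto iz$ fixes the origin $P$ and the half-period $S$ (with $e_3=0$), while interchanging $Q$ and $R$; since the spin structure transforms by $\vartheta(f\circ j)=j^\ast\vartheta(f)$, the action fixes $\cO_E$ and $\cO_E(P-S)$ and swaps $\cO_E(P-Q)\leftrightarrow\cO_E(P-R)$. For the two $j$-invariant spin structures this closes the argument at once: a single guaranteed solution generates a free orbit of four \emph{distinct} solutions, all carrying that same spin structure, so the length-$4$ solution scheme consists of exactly these four reduced points. This produces $8$ of the sought $16$ solutions.

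The remaining and genuinely harder case is the swapped pair $\cO_E(P-Q),\cO_E(P-R)$. A single orbit contributes only \emph{two} of its four members to each of these two spin structures, so the symmetry argument by itself yields only $\ge 2$ solutions per spin structure, hence $12$ in total. To reach the full count I would show that the solution scheme for $\cO_E(P-Q)$ is \emph{reduced}, so that its length $4$ forces four distinct points and therefore a second $\sigma$-orbit beyond the pair $g_3,g_3^\sigma$ coming from one guaranteed solution $g_3$. I expect this reducedness to be the main obstacle, and I would attack it exactly as in the preceding lemma: write the candidate cover explicitly as a rational expression in $\wp,\wp'$ with the prescribed poles and zeros, impose the oddness condition, and reduce it to the nonvanishing of an explicit discriminant in $e_1,e_2,e_3$; the relations $e_3=0$, $e_2=-e_1$ valid for $\tau=i$ should make every such discriminant nonzero, so that all solutions are simple. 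Granting reducedness, each solution scheme has length $c_3=4$, the free $\langle j\rangle$-action organizes the points into four orbits (one over $\cO_E$, one over $\cO_E(P-S)$, and two over the swapped pair), and comparison with the upper bound of Proposition \ref{prop:upperbound4} shows the count is exactly $16$.
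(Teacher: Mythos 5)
Your first half coincides with the paper's argument: on the lemniscatic curve $\tau=i$ the preceding lemma makes the $\langle j\rangle$-action on honest solutions free, and for the two $j$-invariant spin structures a single solution therefore spawns four distinct ones carrying the same $\vartheta$, accounting for $8$ of the $16$. The genuine gap is in the swapped pair $\cO_E(P-Q)\leftrightarrow\cO_E(P-R)$, where you correctly note that the orbit argument alone yields only two solutions per spin structure ($12$ in total) and propose to reach $16$ by proving the length-$4$ zero scheme of $\phi$ is reduced, ``exactly as in the preceding lemma.'' That lemma's discriminant computation succeeds only because the symmetry hypothesis $g^\sigma=\pm g$ rigidifies $g$ to the single explicit function $\wp'(z)\,\frac{\wp(z)-e_2}{\wp(z)-e_1}$ up to a constant, after which oddness is a concrete nonvanishing check. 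For an arbitrary solution with spin structure $\cO_E(P-Q)$ the pole divisor $3P+x$ involves an unknown $x$, the function is not pinned down, and reducedness is a transversality statement about the section $\phi$ at unknown zeros of a nonlinear system: there is no candidate function to write out and no single discriminant to evaluate. Moreover the zero scheme of $\phi$ may a priori contain points that are not honest degree-$4$ odd covers (for instance over $x=P$, or where zeros of $s$ cancel poles of $s^2\,dz$ and drop the degree), so even a reduced length-$4$ scheme would not immediately give four genuine solutions. The step carrying you from $12$ to $16$ is thus announced but not performed, and the proposed method does not obviously extend.

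The paper closes precisely this gap with a monodromy argument instead of a transversality argument: the spin moduli space $\cS_{1,1}^{+}$ of even spin structures is connected with a degree-$3$ forgetful map to $\cM_{1,1}$, so the number of solutions attached to a general even $[E,p,\vartheta]$ is well defined; since it equals $4$ for the $j$-invariant even spin structure on the curve $\tau=i$, it equals $4$ for all three even spin structures on a general elliptic curve, giving $4+3\cdot 4=16$. You should replace your reducedness plan by this connectedness argument, or otherwise produce a second $\langle j\rangle$-orbit meeting $\cO_E(P-Q)$. One further caveat, which your write-up shares with the paper but makes more visible: the existence of at least one solution per spin structure is extracted from $c_3\bigl(q^\ast\cV(2)\bigr)\neq 0$, which strictly speaking only forces $\phi$ to vanish somewhere; identifying such a zero with a genuine degree-$4$ odd cover requires excluding the degenerate loci just mentioned.
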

\begin{proof}
For two of the possible theta characteristics, namely the trivial and one of the even ones, we can assume that $j^*(\vartheta)\cong \vartheta$. It follows there are exactly $4$ solutions in each case: $g, g^j, g^{\sigma }=g^{j^2}$ and  $g^{j\sigma}=g^{j^3}$.
To show that this is so also for the remaining theta characteristics, we use a monodromy argument. Let us consider the $1$-dimensional spin moduli space
$$\cS_{1,1}^{+}=\left\{[E,p,\vartheta]:[E,p]\in \cM_{1,1},   \vartheta^2\cong\cO_E,\  \vartheta \ncong \cO_E\right\}$$
which is known to be connected with a forgetful map $\cS_{1,1}^+ \to \cM_{1,1}$ of degree $3$.  We have shown that there are $4$ odd meromorphic functions corresponding to a general $[E,p,\vartheta]\in \cS_{1,1}^+$. It follows that for a generic elliptic curve there are $12$ solutions attached to even theta characteristics. The conclusion is that we can find at least $16$ solutions of Equation (\ref{main_equation}) for a generic elliptic curve.
\end{proof}

\subsection{Odd covers of degree $5$ on an elliptic curve.} We establish the following result:

\begin{thm} \label{main_section5}
The number $N_5$ of odd maps of degree $5$ computed in the case of a general elliptic curve is equal to $16$.
\end{thm}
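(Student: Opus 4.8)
The plan is to follow exactly the same strategy that established Theorem \ref{main_section4}, namely to prove the two matching bounds $N_5\le 16$ and $N_5\ge 16$ and combine them. For the upper bound I would reprove the analogue of Proposition \ref{prop:upperbound4} using the vector-bundle framework of Lemma \ref{lem:c1Dma}. The key change is the pole orders: an odd degree-$5$ cover totally ramified at $P$ lies in $H^0\bigl(E,\cO_E(5P)\bigr)$, so writing $A:=f^*(\infty)=5P$, the solution $s$ of equation (\ref{main_equation}) sits in $H^0\bigl(E,\vartheta(A)\bigr)$ and $df=s^2\omega$ sits in $H^0\bigl(E,\omega_E(2A)\bigr)=H^0\bigl(E,\cO_E(10P)\bigr)$. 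Globalising over the choice of branch divisor, I would set up bundles $\cG':=D_{?,?}$, $\cF':=D_{?,?}$ and $\cU':=D_{?,?}$ whose fibres realise $H^0(E,\cO_E(4P+x))$ (the space of $f$'s, differentiated), $H^0(E,\vartheta(5P))$-type spaces (the $s$'s), and $H^0(E,\cO_E(10P))$ respectively, exactly as $\cG,\cF,\cU$ did in degree $4$. Lemma \ref{lem:c1Dma} computes all the needed first Chern classes as $c_1(D_{m,A})=m\cdot\deg(A)$, and the identical $c_3$ computation on $\bP(\cF'^\vee)$ via the splitting principle and the relation $\epsilon^3=-c_1(\cF')$ should again yield $4$ solutions per spin structure, hence $N_5\le 16$ summing over the four theta characteristics.

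For the lower bound I would reprove the analogue of Proposition \ref{prop:lowerbound4} by the same symmetry and monodromy argument. Here the totally-ramified point $P$ is canonically the origin, so given one solution $g$ of degree $5$ the functions $g^\sigma=g\circ\sigma$, $g^j=g\circ j$ and $g^{j\sigma}$ produced by the involution $\sigma$ and the order-$4$ automorphism $j$ (with $j^2=\sigma$) of the curve $E_{\tau=i}$ give further solutions. I would prove that these four are pairwise distinct by the same discriminant computation: form the explicit meromorphic function attached to the prescribed divisor $\Div(g_0)$ dictated by the four triple ramification points and show its oddness forces a discriminant $\Delta(\tau)$ that is nonzero at $\tau=i$ for each of the three cases $e_1=0$, $e_2=0$, $e_3=0$. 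This yields $4$ solutions for a general point of $\cS_{1,1}^+$; using that the forgetful map $\cS_{1,1}^+\to\cM_{1,1}$ has degree $3$ and that $\cS_{1,1}^+$ is connected, monodromy spreads these $4$ solutions over all three nontrivial even spin structures, giving $12$, and the trivial spin structure contributes the remaining $4$, for a total of at least $16$.

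I expect the main obstacle to be the upper-bound bookkeeping: the degree-$5$ situation is genuinely different from degree $4$ because $P$ is a point of \emph{total} ramification rather than merely triple ramification, so the cover has the rigid pole divisor $5P$ at $P$ together with three unassigned triple ramification points, and the bundle $\cG'$ of differentiated covers must correctly encode this constraint. Concretely I would need to verify that the relevant fibre dimensions (equivalently the ranks of $\cF'$, $\cG'$ and $\cV':=\cU'/\cG'$) come out so that the top Chern class $c_3$ of the twisted quotient is the thing one must intersect, and that the vanishing locus of the section $\phi\in H^0\bigl(\bP(\cF'^\vee),q^*\cV'(2)\bigr)$ has expected codimension equal to the dimension of the projective bundle. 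Provided the ranks and twists are arranged so that $\dim\bP(\cF'^\vee)=3$ and $c_1(\cV')=5$ as in degree $4$, the arithmetic $4(c_1(\cV')+2\epsilon^3)=4(5-4)=4$ reproduces itself verbatim; the real work is checking that these numerical coincidences are not accidental but forced by Lemma \ref{lem:c1Dma} applied to the correct divisors. The lower-bound discriminant computation is routine once the explicit function $g_0$ analogous to $G(z)=\wp'(z)\tfrac{\wp(z)-e_2}{\wp(z)-e_1}$ is written down for the degree-$5$ ramification profile.
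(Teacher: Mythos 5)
Your lower bound is essentially the paper's own argument: the same $\sigma$ and $j$ symmetry at $\tau=i$, the same discriminant computation showing $g^{\sigma}\neq \pm g$, and the same monodromy argument on the connected degree-$3$ cover $\cS_{1,1}^+\to\cM_{1,1}$ to spread the count $4$ over all spin structures. One small step you gloss over: before you can write down the explicit candidate $\widetilde G(z)=\wp'(z)(\wp(z)-e_1)$ you must first pin down the full zero fibre of a putative $\sigma$-(anti)invariant solution. In degree $5$ that fibre is $3Q+u+\sigma(u)$ a priori, and the paper needs a separate linear-equivalence argument (its Lemma \ref{lem410}) to force $u+\sigma(u)=R+S$; this is easy but not automatic from "the prescribed ramification divisor."

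The genuine problem is in your upper bound. The relative machinery of Lemma \ref{lem:c1Dma} exists in degree $4$ precisely because the pole divisor $3P+x$ moves with $x\in E$, so one gets rank-$3$ bundles over $E$ and a $3$-dimensional projective bundle $\bP(\cF^{\vee})$ on which a $c_3$ lives. In degree $5$ the pole divisor is rigidly $5P$: there is no base to fibre over, $\cF'$ is just the fixed $3$-dimensional space $H^0(E,\cO_E(3P))$ (or $H^0(E,\cO_E(2P+Q))$), and $\bP(\cF'^{\vee})\cong\bP^2$, not a threefold. Likewise $\delta:H^0(E,\cO_E(5P))\to H^0(E,\cO_E(6P))$ has $4$-dimensional image inside a $6$-dimensional space, so the quotient $\cV'$ has rank $2$, not $3$. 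Your hoped-for identities $\dim\bP(\cF'^{\vee})=3$, $c_1(\cV')=5$ and the arithmetic $4\bigl(c_1(\cV')+2\epsilon^3\bigr)=4$ therefore do not "reproduce verbatim" --- they are not even the right shape. The correct (and much simpler) count, which is what the paper does, is a single fixed intersection in $\bP\bigl(H^0(E,\cO_E(6P))\bigr)\cong\bP^5$: the image of $\alpha$ is the Veronese surface $\bP^2\hookrightarrow\bP^5$, of degree $4$, and one intersects it with the $3$-plane $\bP(\mathrm{Im}\,\delta)$, obtaining $4$ points with multiplicity per spin structure (equivalently, two conics in $\bP^2$ meeting in $2\times 2=4$ points). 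So the final number $4$ per $\vartheta$, hence $N_5\le 16$, is correct, but to get there you must abandon the degree-$4$ bundle bookkeeping rather than adjust its twists.
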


\begin{proof}
Thanks to Proposition \ref{equation}, this problem is equivalent to finding the number of solutions of equation (\ref{main_equation}). The result is proved by combining the upper bound provided by Proposition \ref{prop:upperbound5} and the lower bound provided by Proposition \ref{prop:lowerbound5}.
\end{proof}

This situation is simpler than the one considered in Theorem \ref{main_section4}, since one of the fibres of the map $f:E\rightarrow \bP^1$ is $5P$ and there is no freedom for a new pole. Fix $P$ to be the origin of the curve $E$. We count the solutions of equation (\ref{main_equation}) for a given spin structure $\vartheta$. Assume $f$ has a pole of order $5$ at $P$ and a zero of order $3$ at $Q$, as well as two other ramification points of index $3$. By looking at the local expression of $f$ at $P$ and taking derivatives, we obtain that $df$ has a pole of order $6$ at $P$. Derivation induces a map
$$\delta :  H^0\bigl(E,\cO_E(5P)\bigr) \to  H^0\bigl(E,\omega_E(6P)\bigr)\cong H^0\bigl(E,\cO_E(6P)\bigr).$$
Since the kernel is formed by the constants the image of $\delta$ is $4$-dimensional.
On the one hand, when $\vartheta\cong \cO_E$, we have to consider the map:
$$\alpha:  H^0\bigl(E, \cO_E(3P)\bigr) \to H^0\bigl(E, \cO_E(6P)\bigr), \ \ \mbox{ }   \alpha(s):=s^2dz.$$ On the other hand, when $\vartheta$ is even, the map $\alpha$ has to be defined by
$$\alpha: H^0\bigl(E,\cO_E(2P+Q)\bigr) \to H^0\bigl(E, \cO_E(6P)\bigr),  \ \mbox{ }  \ s\mapsto s^2hdz,$$ where $\Div(h)=2P-2Q$.

\begin{prop} \label{prop:upperbound5}
Equation (\ref{main_equation}) has at most $16$ distinct solutions in degree $5$.
\end{prop}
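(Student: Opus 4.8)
The plan is to mimic the proof of Proposition \ref{prop:upperbound4}, exploiting the simplification noted above: since $f$ is totally ramified at $P$, the divisor of poles is rigidly $5P$ and there is no auxiliary point $x$ to vary. Consequently, in contrast to the degree $4$ situation, no base elliptic curve enters the construction and all the spaces in play are honest finite-dimensional $\bC$-vector spaces, so the whole calculation will collapse onto a single $\bP^2$.

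First I would fix one of the four spin structures $\vartheta$ and record the relevant dimensions. One has $\dim H^0\bigl(E,\cO_E(5P)\bigr)=5$ and $\dim H^0\bigl(E,\cO_E(6P)\bigr)=6$, and since $\ker(\delta)$ consists precisely of the constants, the image $\im(\delta)$ is $4$-dimensional; hence the quotient $\cV:=H^0\bigl(E,\cO_E(6P)\bigr)/\im(\delta)$ has dimension $2$. By Proposition \ref{equation} (compare Corollary \ref{cor:intersection}), a solution of (\ref{main_equation}) for this $\vartheta$ is exactly an $s$ in the three-dimensional domain $W$ of $\alpha$ — namely $W=H^0\bigl(E,\cO_E(3P)\bigr)$ when $\vartheta\cong\cO_E$ and $W=H^0\bigl(E,\cO_E(2P+Q)\bigr)$ when $\vartheta$ is even — whose image $\alpha(s)$ lies in $\im(\delta)$. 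Rescaling $s$ rescales $f$ by an affine automorphism of $\bP^1$, so solutions are counted by the projective space $\bP(W)$.

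Next I would set up the Chern class count. Put $\bP:=\bP(W^\vee)\cong\bP^2$ and write $\epsilon:=c_1\bigl(\cO_{\bP}(1)\bigr)$, so that $\epsilon^2=1$ and $\epsilon^3=0$. The tautological inclusion $\cO_{\bP}(-1)\hookrightarrow W\otimes\cO_{\bP}$, squared, fed into $\alpha$, and then projected modulo $\im(\delta)$, globalizes the assignment $s\mapsto\alpha(s)\ \mathrm{mod}\ \delta$ to a bundle map $\cO_{\bP}(-2)\to\cV\otimes\cO_{\bP}$, that is, to a section $\phi\in H^0\bigl(\bP,\cV(2)\bigr)$. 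Because $\cV$ is now a fixed vector space, $\cV\otimes\cO_{\bP}$ is the trivial rank-$2$ bundle and $\cV(2)\cong\cO_{\bP}(2)^{\oplus 2}$. The zero locus of $\phi$ is precisely the set of solutions for $\vartheta$, and by the parameter count $\gE(E,P)$ is $0$-dimensional, so this zero locus is finite. Hence the number of solutions, counted with multiplicity, equals $c_2\bigl(\cO_{\bP}(2)^{\oplus 2}\bigr)=(2\epsilon)^2=4\epsilon^2=4$; in particular there are at most $4$ distinct solutions for each spin structure. Summing over the four spin structures yields at most $16$ solutions of (\ref{main_equation}) in degree $5$.

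The main point requiring care — and the analogue of the genuine computation in the degree $4$ case — is that here the base collapses to a point: one must verify that $\cV$ is honestly trivial of rank $2$, so that the correction term $\epsilon^3$ present in the degree $4$ calculation disappears and the count reduces cleanly to $c_2\bigl(\cO(2)^{\oplus 2}\bigr)=4$ rather than to a top Chern class on a $\bP^2$-bundle over $E$. The only other delicate ingredient is the finiteness of the zero locus of $\phi$ (equivalently, that $\gE(E,P)$ is $0$-dimensional for general $[E,P]$), which is what legitimizes reading off the bound from the top Chern class; this I would invoke from the parameter count already established before the statement.
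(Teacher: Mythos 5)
Your argument is essentially the paper's own proof: the paper likewise reduces to the kernel of $H^0(E,\cO_E(3P))\to H^0(E,\cO_E(6P))/\im(\delta)$ with $\dim\im(\delta)=4$, and reads off the count of $4$ per spin structure as the intersection of the degree-$4$ Veronese surface $\bP(W)\hookrightarrow\bP^5=\bP\bigl(H^0(E,\cO_E(6P))\bigr)$ with the $3$-plane $\bP(\im\delta)$, which is exactly your $c_2\bigl(\cO_{\bP^2}(2)^{\oplus 2}\bigr)=4$ (B\'ezout for two conics) in different language. Your explicit flagging of the finiteness of the zero locus is a reasonable extra precaution, but otherwise the two proofs coincide.
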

\begin{proof}
The result of Corollary \ref{cor:intersection} still holds: the solutions of Equation (\ref{main_equation}) lies in the intersection of the images of the maps $\delta$ and $\alpha$, up to constants. Hence, we have to look at the kernel of the map
$$ H^0\bigl(E, \cO_E(3P)\bigr) \to H^0\bigl(E, \cO_E(6P)\bigr) \to \frac {H^0\bigl(E, \cO_E(6P)\bigr)}{\delta \bigl(H^0(E,\cO_E(5P))\bigr)}.$$
By projectivizing, this amounts to considering inside $\bP\bigl(H^0(E,\cO_E(6P))\bigr)\cong \bP^5$  the intersection of the $3$-plane $\bP\bigl(\mbox{Im}(\delta)\bigr)$ with the image
$\bP\bigl(H^0\left(E,\cO_E(3P)\right)\bigr)$ of $\alpha$, which is a Veronese surface. Counting with multiplicities, there are $4$ solutions when $\vartheta$ is trivial. The same argument works also when $\vartheta\ncong \cO_E$. Putting everything together, we obtain $N_5\leq 16$.
\end{proof}

\vskip 3pt

Now we prove the existence of exactly $16$ distinct solutions of equation (\ref{main_equation}) for a particular elliptic curve. We follow the same strategy as in the previous section.
We fix a meromorphic function $g$ inducing an odd map of degree $5$ with a pole at $P$ of order $5$ and three additional triple ramification points $x, y$ and $z$.
Let $\sigma $ be the automorphism of $E$ fixing $P$. Then, $g^\sigma $ is another meromorphic function on $E$ with the same properties. Assume $g^\sigma =\pm g$. Then one of the ramification points must be one of the points $Q, R, S$. We may assume this point to be $x=Q$ and that $g(Q)=0$.

\begin{lem}\label{lem410} Let $g$ be a meromorphic function as described above with $g^\sigma = \pm g$ and $g(Q)=0$. Then $g^*(0)=3Q+R+S$.
\end{lem}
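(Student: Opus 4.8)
The plan is to pin down the fibre $g^*(0)$ by exploiting the involution $\sigma$ together with the parity of the degree. First I would dispose of the possibility $g^\sigma = g$. If $g\circ\sigma = g$, then $g$ is constant on the $\sigma$-orbits and therefore factors through the quotient $E \to E/\langle\sigma\rangle \cong \bP^1$, which is exactly the degree-two map $h$ attached to the linear series $|2P|$. Consequently $\mathrm{deg}(g)$ would be divisible by $2$, contradicting $\mathrm{deg}(g)=5$. Hence necessarily $g^\sigma = -g$, and this is the only case to analyse. This is precisely the point where the odd degree makes the degree-$5$ situation cleaner than the degree-$4$ one treated above.

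Next, using $g^\sigma = -g$, I would show that the three non-trivial $2$-torsion points all lie over $0$. Indeed, every $\sigma$-fixed point $w$ — that is, each of the $2$-torsion points $P, Q, R, S$ — satisfies $g(w)=g(\sigma(w))=-g(w)$, so that $g(w)\in\{0,\infty\}$. Since $g^*(\infty)=5P$, the only pole of $g$ is $P$, and as $Q,R,S\neq P$ we conclude that $g(Q)=g(R)=g(S)=0$.

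Finally I would read off the fibre from its degree. The divisor $g^*(0)$ has degree $5$ and contains $3Q$, because $Q$ is a triple ramification point with $g(Q)=0$. Since the only ramification points of $g$ are $P,Q,y,z$, and the values $g(y),g(z)$ are nonzero (each triple ramification point lies in its own fibre, as $3+3>5$), the remaining weight $5-3=2$ in $g^*(0)$ is carried by two distinct unramified points. As $R$ and $S$ are two distinct points of $g^{-1}(0)$ different from $Q$, they must be precisely these two points, which yields $g^*(0)=3Q+R+S$.

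The only genuinely delicate step is the first one: recognising that $g^\sigma=g$ is excluded by the oddness of the degree. Once $g^\sigma=-g$ is secured, everything reduces to the observation that a $\sigma$-fixed point is forced onto a fixed value of the target involution $t\mapsto -t$, namely $\{0,\infty\}$, after which the argument is pure bookkeeping on the ramification divisor. I do not anticipate any serious obstacle beyond keeping careful track of multiplicities and confirming that the two additional points in $g^*(0)$ are distinct and unramified.
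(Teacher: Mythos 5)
Your proof is correct, but it follows a genuinely different route from the paper's. The paper argues by contradiction on the fibre itself: assuming a zero $u$ of $g$ not fixed by $\sigma$, it writes $\Div(g)=3Q+u+\sigma(u)-5P$, compares with the explicit function $\widetilde g$ induced by $\wp'(z)(\wp(z)-e_1)$, whose divisor is $3Q+R+S-5P$, and deduces $u+\sigma(u)\sim R+S$; since also $u+\sigma(u)\sim 2P\sim 2R$, this would force $R\sim S$, impossible on an elliptic curve. You instead first eliminate the case $g^\sigma=+g$ outright by the parity of the degree (a $\sigma$-invariant function factors through $h:E\to \bP^1$, so its degree is even, whereas $\deg g=5$), and then exploit anti-invariance: every $\sigma$-fixed point must map to a fixed point of $t\mapsto -t$, i.e.\ to $0$ or $\infty$, so $Q,R,S\in g^{-1}(0)$ since $g^*(\infty)=5P$; a degree count on $g^*(0)\geq 3Q+R+S$ finishes. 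Your argument is more elementary — no Weierstrass functions and no linear-equivalence step — and it yields the extra information that $g^\sigma=-g$ is the only possible case, which the paper does not isolate. What the paper's route buys is that the explicit model $\widetilde g$ is already on the table, which is used immediately afterwards to identify $g$ with $\widetilde g$ up to a constant; but that identification follows from the divisor equality either way, so nothing essential is lost. One small bookkeeping point in your last step: the cleanest closing is simply that $3Q+R+S$ is a subdivisor of $g^*(0)$ of full degree $5$, so equality holds — the discussion of the remaining weight being carried by unramified points is not needed.
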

\begin{proof}
Assume $u\in E$ is a zero of $g$ which is not fixed by $\sigma$. Then $\sigma(u)$ must be another zero of $g$. Therefore we get $\Div(g)=3Q+u+\sigma(u)-5P.$
Let now consider $\widetilde G(z):= \wp '(z)(\wp (z)-e_1)$; it is easy to check that the divisor of the meromorphic function $\widetilde g$ induced on $E$ by $\widetilde G$ is $\Div(\widetilde g)=3Q+R+S-5P$.
Then, the divisor of $\frac{g}{\widetilde g}$ is $u+\sigma(u)- R-S$. Hence $u+\sigma (u) \sim R+S$. Since $u+\sigma(u)\sim 2R$, we obtain that $R$ and $S$ are linearly equivalent, which is impossible.
\end{proof}

From Lemma \ref{lem410}, the functions $g$ and $\widetilde g$ have the same attached divisor,  hence they only differ by a constant and we may assume that $g=\widetilde g$.

\begin{prop} \label{prop:lowerbound5}
Equation (\ref{main_equation}) has at least $16$ solutions in degree $5$.
\end{prop}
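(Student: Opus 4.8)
The plan is to follow the strategy of Proposition \ref{prop:lowerbound4} as closely as possible. I again specialize to the harmonic curve $E$ with $\tau=i$, which carries an order-four automorphism $j$ satisfying $j^2=\sigma$ and permuting the three nontrivial two-torsion points by fixing one of them and interchanging the other two. For a solution $g$ of (\ref{main_equation}), the functions $g^j$, $g^\sigma=g^{j^2}$ and $g^{j\sigma}=g^{j^3}$ are again solutions, now carrying the spin structures $j^*\vartheta$, $\sigma^*\vartheta=\vartheta$ and $(j\sigma)^*\vartheta$ respectively. As in degree four, the aim is to produce four distinct solutions on $E$ for each spin structure and then to spread the count over the three even spin structures by invoking the connectedness of $\cS_{1,1}^{+}$ and its degree-three forgetful map to $\cM_{1,1}$.

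First I would treat the trivial spin structure. Here Lemma \ref{lem410} is decisive: the only solution fixed by $\sigma$ up to sign is $\widetilde{g}$, induced by $\widetilde{G}(z)=\wp'(z)(\wp(z)-e_1)$, and a short computation of $\mathrm{Div}(\widetilde{g})=3Q+R+S-5P$ together with the fact that its two remaining triple points are interchanged by $\sigma$ shows that $\widetilde{g}$ carries the \emph{even} spin structure $\cO_E(P-Q)$. Consequently no trivial-spin solution can satisfy $g^\sigma=\pm g$, so $\sigma$, and hence $\langle j\rangle\cong\bZ/4$, acts freely on the set of trivial-spin solutions. Since Proposition \ref{prop:upperbound5} presents this set as a length-four, hence nonempty, intersection scheme bounded by four, the free $\langle j\rangle$-action forces the four solutions to form a single orbit $\{g,g^j,g^\sigma,g^{j\sigma}\}$; comparing the four distinct points with the length-four bound shows the scheme is reduced. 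This yields exactly four solutions for $\vartheta\cong\cO_E$.

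The even spin structures are where the argument genuinely departs from the degree-four case. Now Lemma \ref{lem410} produces, for each even $\vartheta$, \emph{precisely one} $\sigma$-fixed solution $\widetilde{g}$, and a parity count of the $\sigma$-orbits rules out four distinct points in the fibre over $(\tau=i,\vartheta)$: four distinct solutions would force the three non-fixed ones into $\sigma$-pairs, which is impossible for an odd number. Hence the length-four fibre is concentrated with multiplicity at $\widetilde{g}$, and $\tau=i$ is useless for a direct count. I would therefore run the monodromy argument intrinsically: the scheme of even solutions is generically finite of degree four over the connected base $\cS_{1,1}^{+}$, and in characteristic zero generic smoothness guarantees that the general fibre is reduced, hence consists of four distinct solutions, provided the total space of solutions is reduced. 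Supplementing the explicit solution $\widetilde{g}$ with an infinitesimal computation showing that $\widetilde{g}$ is a smooth point of this total space over which the forgetful map is totally ramified would legitimize transporting the count four across all three even spin structures, producing $3\times 4=12$ solutions on a general elliptic curve.

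Combining the four trivial-spin solutions with these twelve gives at least sixteen solutions of (\ref{main_equation}) in degree five, which together with Proposition \ref{prop:upperbound5} forces equality. The main obstacle is exactly the even case: because the $\sigma$-fixed solution $\widetilde{g}$ makes the fibre at $\tau=i$ non-reduced, one cannot read four distinct solutions off any elliptic curve with extra automorphisms, and must instead prove that the length-four multiplicity at $\widetilde{g}$ disperses into four distinct solutions on a general curve. Establishing this reducedness of the solution family over $\cS_{1,1}^{+}$ -- equivalently, the transversality along the family direction of the section cutting out the solutions -- is the delicate step on which the whole monodromy argument rests.
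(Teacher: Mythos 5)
Your setup (specializing to $\tau=i$, using the order-four automorphism $j$ with $j^2=\sigma$, and spreading the count over the even spin structures via the connectedness of $\cS_{1,1}^{+}$) is exactly the paper's, but your treatment of the even spin structures rests on a misreading of Lemma \ref{lem410}. That lemma does \emph{not} produce a $\sigma$-fixed solution; it only says that \emph{if} a solution $g$ satisfies $g^\sigma=\pm g$, \emph{then} its divisor is forced to be $3Q+R+S-5P$, i.e.\ $g$ must coincide with the candidate $\widetilde g$ induced by $\widetilde G(z)=\wp'(z)(\wp(z)-e_1)$. Whether $\widetilde g$ actually \emph{is} a solution --- that is, whether it has two further \emph{triple} ramification points rather than four simple ones --- is a separate condition, and the paper's proof consists precisely of checking it: one computes
$$\widetilde G'(z)=(\wp(z)-e_1)\bigl(6\wp(z)^2-2(e_1^2+e_1e_2+e_2^2)+4(\wp(z)-e_2)(\wp(z)-e_3)\bigr)$$
and observes that the extra ramification is triple only if the quadratic factor in $\wp(z)$ has a double root. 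Its discriminant $\Delta(\tau)=16(5e_1^2+6e_2^2+e_3^2+5e_1e_2-8e_2e_3)$ is nonzero at $\tau=i$, so $\widetilde g$ is \emph{not} a solution there. Consequently \emph{no} solution at $\tau=i$, of any spin structure, is fixed by $\sigma$; the group $\langle j\rangle$ acts freely on all solutions, and one gets four distinct solutions for each $\vartheta$ with $j^*\vartheta\cong\vartheta$ (the trivial one and one even one); the monodromy argument then transports the count $4$ to the remaining even spin structures.

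Because you instead take $\widetilde g$ to be a genuine solution, you conclude (via your parity count, which itself presupposes exactly one fixed point) that the even-spin fibre at $\tau=i$ is non-reduced and ``useless for a direct count,'' and you replace the missing step by an appeal to generic smoothness and to an unproved reducedness of the total solution family over $\cS_{1,1}^{+}$ --- which, as you acknowledge, is exactly the point left open. This is a genuine gap: without exhibiting four distinct even-spin solutions on \emph{some} curve (or proving reducedness by other means), the monodromy argument only shows that the number of solutions is constant across the three even spin structures, not that it equals four. The discriminant computation above is what closes the gap, and it simultaneously makes your trivial-spin detour through the parity of the spin structure of $\widetilde g$ unnecessary.
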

\begin{proof}
Now we impose the existence of other odd ramification points for $\widetilde g$. We can use the explicit expression of $\widetilde G$ and the properties of the derivatives of $\wp$. It is easy to check that
$$\widetilde G'(z)= (\wp (z)-e_1)\left(6\wp (z)^2-2 (e_1^2+e_1e_2+e_2^2)+4(\wp(z)-e_2)(\wp(z)-e_3)\right).$$
The discriminant of the quadratic part is
$$\Delta(\tau)=16(5e_1^2+6e_2^2+e_3^2+5e_1e_2-8e_2e_3).$$
As in the computation of $N_4$ it turns out that $\Delta(i)\ne 0$ and then $g^\sigma \ne g$. Moreover, we can use the involution $j$ with $j^2=\sigma$ and then there are $4$ different meromorphic functions with the prescribed ramification and the same theta characteristic $\vartheta$ with $j^*(\vartheta)\cong \vartheta$. Then we can apply the same monodromy argument we used in the previous section to finish the proof.
\end{proof}

\section{Counting odd covers of elliptic curves II: an approach via degeneration}\label{sec:deg}

In this section we present a second proof of Theorems \ref{main_section4} and \ref{main_section5}. The proof relies on counting the number of odd admissible covers from a curve stably equivalent to a rational nodal curve, that is, an elliptic curve with $j$-invariant $\infty$.

\vskip 3pt

\subsection{Odd degree $5$ covers of elliptic curves.} We denote by $\hu_{1,5}^{\mathrm{ord}}$ the $1$-dimensional Hurwitz space
parametrizing odd admissible covers $\bigl[f:X\rightarrow \Gamma, \ P, x, y, z \bigr]$, where $X$ (respectively $\Gamma$) is a connected nodal curve of arithmetic genus one (respectively zero), $f$ is a finite map of degree $5$ which is totally ramified at the point $P\in X$ and triply ramified at the mutually distinct points $x,y,z\in X_{\mathrm{reg}}\setminus \{P\}$. The symmetric group $\mathfrak{S}_3$ acts on $\hu_{1,5}^{\mathrm{ord}}$ by permuting the ramification points $x,y$ and $z$ and we denote the quotient by
$$\hu_{1,5}:=\hu_{1,5}^{\mathrm{ord}}/\mathfrak{S}_3.$$ Let $\sigma_5:\hu_{1,5}\rightarrow \mm_{1,1}$ be the map associating to a cover $\bigl[f:X\rightarrow \Gamma,\ P, x+y+z\bigr]$ the stabilization of the source curve, that is, $[\mathrm{st}(X),P]\in \mm_{1,1}$. We shall determine the degree of the generically finite morphism $\sigma_5$.

\vskip 3pt

We denote by $[R,P, U, V]\in \mm_{0,3}$ a fixed $3$-pointed smooth rational curve and set $$[E_{\infty},P]:=\bigl[R/U\sim V, P]\in \mm_{1,1}$$ to be the pointed elliptic curve with $j$-invariant $\infty$. In what follows we explicitly describe the cycle $\sigma_5^*\bigl([E_{\infty},P]\bigr)$.
We shall count (with appropriate multiplicities) the admissible covers in $\hu_5$ having as source a nodal curve stably equivalent to $[E_{\infty},P]$.

\begin{thm}\label{N5:attempt2}
We have that \ $\mathrm{length }\ \sigma_5^*\bigl([E_{\infty}, P]\bigr)=\mathrm{deg} (\sigma_5)=16$. It follows once more that $N_5=16$.
\end{thm}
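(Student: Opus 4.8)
The plan is to compute the length of the degeneration cycle $\sigma_5^*([E_\infty,P])$ by enumerating all odd admissible covers $[f:X\to\Gamma,\ P,x+y+z]$ of degree $5$ whose source $X$ is stably equivalent to the rational nodal curve $E_\infty=R/U\sim V$, and then to weight each such cover by the local multiplicity dictated by the local ring description \eqref{localring}. First I would analyze the combinatorics of the target: since $E_\infty$ is obtained from a smooth rational spine by gluing two points, the target curve $\Gamma$ must be a tree of $\mathbb P^1$'s, and the degenerate cover forces a distribution of the four ramification conditions (total ramification at $P$, triple ramification at $x,y,z$) across the components of $\Gamma$. I expect only finitely many combinatorial \emph{types} of admissible cover to arise, each type recording which ramification points land on which target component together with the ramification profile over each node of $\Gamma$.

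For each admissible type I would count the covers of that type. The key tool here is the transversality and reality package recalled in Subsection 2.2: over each rational component of $\Gamma$ one is counting pencils with prescribed vanishing sequences at the special points, i.e.\ intersections of Schubert cycles $\sigma_\mu(F_\bullet(\cdot))$ with respect to the osculating flags to the rational normal curve. By property (i) of the osculating flags these intersections have the expected dimension, so the count is genuinely enumerative, and by property (ii), when the branch points are chosen real the intersection is a reduced union of real points — this lets me both count and verify reducedness of the contribution from each smooth-target stratum. I would organize the bookkeeping exactly as in the proof of Theorem \ref{hurwitzfibre}: a single rational component carrying $P$ as a point of total ramification together with the three triple points, glued along a node back to a component that covers the rest, with the two branches at each node matching ramification indices.

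The main obstacle I anticipate is precisely the \emph{local multiplicity} at those covers where the source acquires extra ramification over a node of $\Gamma$. By the local ring \eqref{localring}, $\hu_{1,5}$ is smooth at a cover only when at most one ramification point of $f$ lies over each node $u_i$; when two branches $Q_{i,1},Q_{i,2}$ over the same node both ramify, the equation $s_{i,1}^{\mu_{i,1}}=s_{i,2}^{\mu_{i,2}}=t_i$ contributes a nontrivial local multiplicity to $\sigma_5^*([E_\infty,P])$ equal to $\gcd$- and $\mu$-dependent factors, and these must be computed carefully and summed. I would therefore split the enumeration into the smooth locus (contributing reduced points, counted by Schubert calculus) and the boundary strata where the node carries ramification on both branches (contributing with multiplicity), and show that the total length is $16$. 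Since Proposition \ref{prop:upperbound5} already gives $N_5\le 16$, once I exhibit that the degeneration cycle has length at least $16$ — or directly that it equals $16$ after assembling the multiplicities — the equality $\mathrm{length}\,\sigma_5^*([E_\infty,P])=\deg(\sigma_5)=N_5=16$ follows, with flatness of $\sigma_5$ (equivalently, the fact that $\mm_{1,1}$ is a smooth curve and $\sigma_5$ is finite) guaranteeing that the length of the fibre over $[E_\infty,P]$ computes the degree.
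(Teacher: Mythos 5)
Your overall strategy --- enumerate the admissible covers whose source is stably equivalent to $E_{\infty}$ and weight each by a local multiplicity --- is indeed the shape of the paper's argument, but two of your key steps would fail as stated. First, the counting tool you propose does not apply here. The decisive condition on the degree-$5$ component $f_R:R\to\bP^1$ is that the two glued points $U,V$ lie in the \emph{same fibre} $f_R^{-1}(B)$ with prescribed multiplicities $\alpha$ and $4-\alpha$; this is a condition relating the values of the pencil at two distinct points, not a ramification condition at a single point, so it is not an intersection of Schubert cycles $\sigma_{\mu}(F_{\bullet}(Q))$ with respect to osculating flags, and neither the Eisenbud--Harris transversality nor the Mukhin--Tarasov--Varchenko reality statement from Subsection 2.2 is available. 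The paper instead counts by brute force: normalizing $P=\infty$, $U=1$, $V=0$ it writes $f_R(t)=t^{\alpha}(t-1)^{4-\alpha}(t-b)$ and solves an explicit quadratic in $b$ (e.g.\ $4b^2-7b+4=0$ for $\alpha=1$) coming from the requirement of one further triple point, and similarly for the degree-$4$ map $f_1$ on $R_1$; it must then also track automorphisms of these covers (an involution of $f_1$ swapping $y$ and $z$ contributes a factor $\tfrac12$), which your proposal never mentions.

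Second, you have misidentified where the local multiplicity comes from. It is not the non-reducedness of the equations $s_{i,1}^{\mu_{i,1}}=\cdots=t_i$ in (\ref{localring}) --- $\hu_{1,5}^{\mathrm{ord}}$ is the \emph{normalization}, and the paper checks all the relevant points are smooth on it. The multiplicity is the ramification order of the map $\hu_{1,5}^{\mathrm{ord}}\to\mm_{1,1}$ at $\xi$, which the paper's Claim computes to be $\mathrm{ord}_U(f)+\mathrm{ord}_V(f)=4$ in every case: one writes down the universal family, observes that the source node over $U$ smooths like $u_1u_2=t^{\mathrm{ord}_U(f)}$ and over $V$ like $v_1v_2=t^{\mathrm{ord}_V(f)}$ in terms of the smoothing parameter of the target node, blows down the intermediate rational components, and finds that the surviving node of $E_{\infty}$ has local equation $v_1v_2=t^4$. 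Without this factor of $4$ (and the automorphism corrections) your count cannot reach $16$; with only reduced Schubert-type contributions you would find far fewer points. Finally, invoking Proposition \ref{prop:upperbound5} to close the argument defeats the purpose of this section, which is to give a proof of $N_5=16$ independent of the differential-equation approach; the degeneration must (and does) compute $\deg(\sigma_5)$ exactly on its own.
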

\begin{proof}
Let $\xi:=\bigl[f:X\stackrel{5:1}\rightarrow \Gamma, \ P, x,
y, z\bigr]\in \hu_{1,5}^{\mathrm{ord}}$ be an admissible cover such that the stabilization of $[X,P]$ is $[E_{\infty},P]$. In particular
$f^{-1}(\Gamma_{\mathrm{sing}})=X_{\mathrm{sing}}$, which implies that $R$ appears as an irreducible component of $X$ and $f(U)=f(V)=:B\in \Gamma_{\mathrm{sing}}$. Indeed, if $f(U)\neq f(V)$, then necessarily $f^{-1} f(R)$ contains another (rational) component of $X$ different from $R$, which is impossible for if we denote $f_R:=f_{|R}$, then $\mbox{deg}(f_{R})=\mbox{deg}(f)=5$ because $f_R$ is fully ramified at $P$. We denote by $R_1$ the subcurve of $X$ meeting $R$ at the points $U$ and $V$. Since the arithmetic genus of $X$ is equal to one, it follows that $R\cap R_1=\{U,V\}$.  We set $f(R)=:\bP^1$ and $f(R_1)=:\bP^1_1$, thus $\bP^1\cap \bP^1_1=\{B\}$.

\vskip 3pt

We claim that the degree of the restriction $f_{1}:=f_{|R_1}$ is at most $4$ and that precisely two of the ramification points $x,y,z$ lie on $R_1$, whereas the remaining point lies on $R$. Indeed, else, regarding $B=f(U)=f(V)$ as a smooth point of $\bP^1=f(R)$, we have $f_R^*(B)=\alpha\cdot U+(5-\alpha)\cdot V$, where $1\leq \alpha\leq 4$. Applying the Hurwitz formula to  $f_R:R\rightarrow \bP^1$, we obtain
that apart from $U, V$ and $P$, the cover $f_R$ has precisely one ramification point contributing with multiplicity one to the ramification divisor of $f_R$, which is impossible, for  $f$ is an odd map. It follows that the fibre $f_R^{-1}(B)$ contains a third point $U'\in R\setminus\{U,V\}$.

\vskip 3pt

We now turn our attention to the cover $f_1:R_1\rightarrow \bP^1_1$. We have seen that $\mbox{deg}(f_1)\leq 4$. If the degree of $f_1$ is two or three, then the Hurwitz formula implies that $f_1$ has a \emph{simple} ramification point in $R_1\setminus \{U,V\}$, which is impossible. Thus $\mbox{deg}(f_1)=4$ and we write $f_1^*(B)=\alpha\cdot U+(4-\alpha)\cdot V$, where $1\leq \alpha\leq 3$. It follows that the map $f_R$ is unramified at $U'$ and that
$$f_R^*(B)=\alpha\cdot U+(4-\alpha)\cdot V+U'\in \mbox{Div}(R).$$
Assume $x\in R\setminus\{U,V, P\}$ is the triple ramification point of $f$ lying on $R$, whereas $\{y,z\}\in R_1\setminus\{U,V\}$ are
the triple ramification point of $f$ lying on $R_1$. Note that the curve $X$ consists of a further rational component $R_2$ mapping isomorphically onto $f(R_1)$ and meeting $R$ at the point $U'$.

\vskip 3pt

We distinguish two cases depending on whether $\alpha\in \{1,3\}$, or $\alpha=2$.

\vskip 3pt

\noindent {\bf (i)} Assume $\alpha=1$, that is, $f_R^*(B)=U+3\cdot V+U'$
and $f_1^*(B)=U+3\cdot V\in \mbox{Div}(R_1)$. We claim that up to the $PGL(2)$-action on the base, there exists a \emph{unique} such cover $f_R:R\rightarrow \bP^1$. We may indeed assume
$P=\infty \in R$, $U=1$ and $V=0\in R$. Then the function
$$f_R(t)=t^3(t-1)(t-b),$$
where $b\in \mathbb C$ has a pole of order five at $P$. Imposing the condition that $f$ have triple ramification at a further point $x\in R\setminus\{P,U,V\}$, we obtain that $4b^2-7b+4=0$, thus there are \emph{two} choices for $f_R$. Observe that these two covers lead to genuinely different points in $\hu_{1,5}$ (in particular also in $\hu_{1,5}^{\mathrm{ord}}$), for $f_R$ has no non-trivial automorphisms. Indeed such an automorphism $\tau_R \in \mbox{Aut}(R)$  fixes the point $P$ of total ramification, the unique triple ramification point $x$ of $f_R$, as well as the point $U'\in f^{-1}(B)\setminus \{U,V\}$. Thus $\tau_R=\mbox{Id}_R$.

\vskip 4pt

We now consider the $R_1$-side and assume $U=1, V=\infty\in R_1$ and $y=0\in R_1$. We may write
$f_1(t)=\frac{t^3(t+a)}{t-1}$. Imposing the condition that $f_1$ have a further triple ramification point $z$, we find
\begin{equation}\label{f1form}
f_1(t)=\frac{t^3(t-4)}{t-1},
\end{equation}
and $f_1^*(0)=3\cdot 0+1\cdot 4\in \mbox{Div}(R_1)$ and $f_1^*(-16)=3\cdot 2+1\cdot (-2)\in \mbox{Div}(R_1)$. In particular, $z=2$ is also a triple ramification point of $f_1$.

\vskip 3pt

Observe now that $f_1$ has an automorphism $\tau_1$ of order $2$ that fixes the points $U$ and $V$ and interchanges the ramification points
$y$ and $z$. Using (\ref{f1form}), we find $\tau_1(t)=2-t$. This implies that the map $\hu_{1,5}^{\mathrm{ord}}\rightarrow \hu_{1,5}$ is ramified with order $2$ at such a point $\xi=\bigl[f:X\rightarrow \Gamma, P, x,y,z\bigr]\in \hu_{1,5}^{\mathrm{ord}}$.

\vskip 3pt

The following local statement is essential in the proof of both Theorems \ref{N5:attempt2} and \ref{N4:attempt2}.

\vskip 4pt

\noindent {\bf Claim:} The map $\hu_{1,5}^{\mathrm{ord}}\rightarrow \mm_{1,1}$ is ramified with order $4=\mbox{ord}_U(f)+\mbox{ord}_V(f)$ at the point $\xi\in \hu_{1,5}^{\mathrm{ord}}$.

\vskip 4pt

Assuming this fact for a moment, we conclude that the contribution to the cycle $\sigma_5^{*} \bigl([E_{\infty},P]\bigr)$ coming from the case $\alpha=1$ is equal to $4=2\times 4\times \frac{1}{2}=4$: We multiply by $2$ for the two choices of $f_R$, by $4$ because of the ramification of the map $\hu_{1,5}^{\mathrm{ord}}\rightarrow \mm_{1,1}$ at each of the points $\xi$ and divide by $2$ because of the existence of the automorphism of $f$ which is trivial along $R$ and $R_2$, while being equal to $\tau_1$ along $R_1$. The case $\alpha=3$ is identical (one switches the role of $u$ and $v$). Summarizing the discussion so far, we have identified a subcycle of length $8=4+4$ of
$\sigma_5^* \bigl([E_{\infty},P]\bigr)$ coming from the cases $\alpha \in \{1,3\}$.

\vskip 4pt

\noindent {\emph{Proof of the claim.}} We show that $\hu_{1,5}^{\mathrm{ord}}\rightarrow \mm_{1,1}$ is ramified with order $4$ at the point $\xi$. Let
\begin{equation}\label{univfamily}
F:\mathcal{X}\rightarrow \mathcal{P}
\end{equation}
be the universal degree $5$ admissible cover over $\hu_{1,5}^{\mathrm{ord}}$. One has a finite map $\mathfrak{b}:\hu_{1,5}^{\mathrm{ord}}\rightarrow \mm_{0,4}$ associating to an admissible cover $[f:X\rightarrow \Gamma, P, x, y, z]$ the point $[\Gamma, f(P), f(x), f(y), f(z)]\in \mm_{0,4}$. According to the local description (\ref{localring}) of the local ring of $\hu_{1,5}^{\mathrm{ord}}$ at
the point $\xi$, we have that
$$\hat{\cO}_{\xi, \hu_{1,5}^{\mathrm{ord}}}\cong \mathbb C\bigl[\bigl[s_1,s_2\bigr]\bigr]/s_1^3=s_2=t,$$ where $t$ is the local parameter on $\mm_{0,4}$ corresponding to the boundary point $\mathfrak{b}(\xi)$. Around the points $(\xi, V)$ and $(\xi, U)\in \mathcal{X}$, the cover $F$ considered in (\ref{univfamily}) has the following local expression:

$$\mathcal{X} \ \mbox{ around } (\xi, V): v_1v_2=s_1, \ \mbox{  } \  \mathcal P \mbox{ around } F(\xi,V): \gamma_1\gamma_2=s_1^3,
\ \mbox{ the map } F:\gamma_1=v_1^3, \gamma_2=v_2^3,$$ respectively
$$\mathcal{X} \ \mbox{ around } (\xi, U): u_1u_2=s_1, \ \mbox{  } \  \ \mathcal P \mbox{ around } F(\xi,U): \gamma_1\gamma_2=s_1,
\ \mbox{ the map } F:\gamma_1=u_1, \gamma_2=u_2.$$
We consider the map $\mbox{Spec } \mathbb C[[t]]\rightarrow \hat{\mathcal{O}}_{\xi, \hu_{1,5}^{\mathrm{ord}}}$ given by sending $t\mapsto (s_1=t, s_2=t^3)$.
The induced family of curves $\mathcal{X}\times_{\hu_{1,5}^{\mathrm{ord}}} \mbox{Spec } \mathbb C[[t]]\rightarrow \mbox{Spec } \mathbb C[[t]]$ has local equation
$v_1v_2=t$ around the point $(\xi,V)$, and $u_1u_2=t^3$ around the point $(\xi,U)$ respectively. The fibre over $0$ consists of the nodal genus one curve $X=R\cup R_1\cup R_2$. We first blow-down the $(-1)$-curve $R_2$ and then $R_1$. The resulting curve
$\mathcal{X'}\rightarrow \mbox{Spec } \mathbb C[[t]]$ is the family of curves induced by base-change from $F$ under the map $\hu_{1,5}^{\mathrm{ord}}\rightarrow \mm_{1,1}$. Its central fibre is $[E_{\infty}, P]$ and the local equation of $\mathcal{X'}$ around the unique node of the central fibre is
$$v_1v_2=t\cdot t^3=t^4,$$
which finishes the proof of the claim.

\vskip 3pt

We now proceed with the proof of the remaining cases of Theorem \ref{N5:attempt2}.

\vskip 4pt

\noindent {\bf (ii)} Assume now $\alpha=2$, thus $f_R^*(B)=2\cdot U+2\cdot V+U'\in \mbox{Div}(R)$ and $f_1^*(B)=2\cdot U+2\cdot V\in \mbox{Div}(R_1)$. In order to count the number of such maps $f_R$, assume again $P=\infty\in R$, $U=1$ and $V=0\in R$. Writing $f_R(t)=t^2(t-1)^2(t-b)$,
the condition that $f_R$ has a triple ramification point $x\in R\setminus\{0,1,\infty\}$ leads to the equation $16b^2-16b+9=0$, thus to two choices for $f_R$. The same argument as in the case (i) shows that $f_R$ has no automorphism, nor are the found maps equivalent under
the $PGL(2)$-action. We now consider the $R_1$-side and set $U=1, V=0\in R_1$ and $y=\infty\in R_1$. Up to the $PGL(2)$-action on the base $\bP^1_1$ of the map $f_1:R_1\rightarrow \bP^1_1$ we find two solutions, namely
\begin{equation}\label{eq9}
f_1(t)=\frac{48 \sqrt{3} t^2(t-1)^2}{\bigl(-2t+1+\sqrt{3}\bigr) \bigl(\sqrt{3}+6t-3\bigr)^3}
\ \mbox{ and  } \ \tilde{f}_1(t)=\frac{t^2(t-1)^2}{t-\frac{1}{2}-\frac{\sqrt{3}}{4}}.
\end{equation}
Neither $f_1$ nor $\tilde{f}_1$ have non-trivial automorphisms. Denote by $\tau_1:R_1 \rightarrow R_1$ the automorphism fixing $0$ and $1$ and such that
$$\tau_1(\infty)=\frac{1}{2}+\frac{\sqrt{3}}{6} \ \mbox{ and } \tau_1 \bigl(\frac{1}{2}-\frac{\sqrt{3}}{6}\bigr)=\infty.$$
Then $\tilde{f}_1\circ \tau_1=f_1$. Via the automorphism $\tau\in \mbox{Aut}(X)$ such that $\tau_{R}=\mbox{Id}_R$ and $\tau_{|R_1}=\tau_1$, it follows that $f_1$ and $\tilde{f}_1$ lead to the same point of $\hu_{1,5}^{\mathrm{ord}}$. An argument identical to the one in the claim shows that around each such point $\xi$, the map $\hu_{1,5}^{\mathrm{ord}}\rightarrow \mm_{1,1}$ is ramified with order $4$. Summarizing, the contribution to
the cycle $\sigma_5^*\bigl([E_{\infty},P]\bigr)$ coming from case (ii) is equal to $2\times 4=8$.

\vskip 4pt

None of the points $\xi\in \hu_{1,5}^{\mathrm{ord}}$ found in this proof carry an automorphism fixing all the branch points, hence they all correspond to smooth points of
$\hu_{1,5}^{\mathrm{ord}}$. Putting cases (i) and (ii) together, we conclude that the degree of the map $\sigma_5$ equals $16=8+8$, which finishes the proof.
\end{proof}

\vskip 4pt

\subsection{Odd degree $4$ covers of elliptic curves.} We denote by $\hu_{1,4}^{\mathrm{ord}}$ the $1$-dimensional Hurwitz space
parametrizing odd admissible covers $\bigl[f:X\rightarrow \Gamma, \ P, x, y, z \bigr]$, where $X$ (respectively $\Gamma$) is a connected nodal curve of arithmetic genus one (respectively zero), $f$ is a finite map of degree $4$ which is triply ramified at the point $P\in X$ and at the pairwise distinct points $x,y,z\in X_{\mathrm{reg}}\setminus \{P\}$. The symmetric group $\mathfrak{S}_3$ acts on $\hu_{1,4}^{\mathrm{ord}}$ by permuting $x,y$ and $z$. Let
$$\hu_{1, 4}:=\hu_{1, 4}^{\mathrm{ord}}/\mathfrak{S}_3$$ be the quotient and let $\sigma_4:\hu_{1, 4}\rightarrow \mm_{1,1}$ be the map associating to a cover $\bigl[f:X\rightarrow \Gamma,\ P, x+y+z\bigr]$ the stabilization of the source curve.

\begin{thm}\label{N4:attempt2}
We have that \ $\mathrm{length }\ \sigma_4^*\bigl([E_{\infty},P]\bigr)=\mathrm{deg} (\sigma_4)=16$. It follows once more that $N_4=16$.
\end{thm}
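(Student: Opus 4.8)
The plan is to mirror the structure of the proof of Theorem \ref{N5:attempt2} almost verbatim, substituting degree $4$ for degree $5$ and adjusting the ramification bookkeeping accordingly. As in the degree $5$ case, I would start with an admissible cover $\xi:=\bigl[f:X\stackrel{4:1}{\rightarrow}\Gamma,\ P,x,y,z\bigr]\in \hu_{1,4}^{\mathrm{ord}}$ whose source stabilizes to $[E_{\infty},P]$. The central topological fact $f^{-1}(\Gamma_{\mathrm{sing}})=X_{\mathrm{sing}}$ again forces $R$ to appear as a component of $X$ with $f(U)=f(V)=:B\in\Gamma_{\mathrm{sing}}$, since the restriction $f_R:=f_{|R}$ is totally ramified at $P$ and hence of full degree $4$. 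Writing $R_1$ for the subcurve meeting $R$ at $U$ and $V$, the genus-one condition gives $R\cap R_1=\{U,V\}$, and a Hurwitz-formula count identical in spirit to the one in Theorem \ref{N5:attempt2} should pin down how the three triple ramification points $x,y,z$ distribute between $R$ and $R_1$ and constrain $\mathrm{deg}(f_1)$ where $f_1:=f_{|R_1}$.

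The main body of the argument is then the explicit enumeration. First I would fix coordinates $P=\infty$, $U=1$, $V=0$ on $R$ and write $f_R$ as an explicit rational (in fact polynomial, since $P=\infty$ is the only pole) function with the prescribed ramification profile $\alpha\cdot U+(d-\alpha)\cdot V+\cdots$ over $B$; imposing the existence of the further triple ramification point $x$ yields a polynomial equation in the single free parameter $b$ whose number of roots counts the choices of $f_R$. One repeats this on the $R_1$-side to count the choices of $f_1$, being careful to quotient by $PGL(2)$ acting on each target $\bP^1$. As in case (i) and case (ii) of the degree $5$ proof, I expect the analysis to split according to the balancing index $\alpha$ over $B$, and in each stratum one records (a) the number of solutions $f_R$, (b) the number of solutions $f_1$ up to $PGL(2)$, (c) the local ramification order of the map $\hu_{1,4}^{\mathrm{ord}}\rightarrow \mm_{1,1}$ at $\xi$, and (d) any automorphism of the cover forcing division, exactly as in the formula $4=2\times 4\times\frac12$ there.

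The crucial local input is the Claim already established in the proof of Theorem \ref{N5:attempt2}, which asserts that $\hu^{\mathrm{ord}}_{1,g}\to \mm_{1,1}$ is ramified with order $\mathrm{ord}_U(f)+\mathrm{ord}_V(f)$ at $\xi$; the excerpt explicitly states that this local statement is essential for \emph{both} theorems, so I would invoke it directly rather than reprove it. Using the local ring description \eqref{localring}, the smoothing parameters $s_1,s_2$ at the two nodes $U,V$ are related to the base parameter $t$ on $\mm_{0,4}$ by $s_1^{\mu_U}=s_2^{\mu_V}=t$, and blowing down the chain of rational tails recovers a single node on $E_\infty$ with local equation $v_1v_2=t^{\mathrm{ord}_U+\mathrm{ord}_V}$, giving the ramification order. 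One then simply sums the contributions from the various $\alpha$-strata to arrive at $16$.

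The step I expect to be the main obstacle is the explicit solution of the ramification equations on each side and the correct accounting of $PGL(2)$-equivalences and cover automorphisms — precisely the bookkeeping that, in the degree $5$ case, produced the slightly delicate factors $2\times 4\times\frac12$ in case (i) and $2\times 4$ in case (ii). The danger is double-counting or omitting an automorphism $\tau\in\mathrm{Aut}(X)$ that is trivial on $R$ but nontrivial on $R_1$ and that identifies two a priori distinct solutions, so I would check in each stratum, exactly as was done via the explicit $\tau_1$ there, whether the two computed maps $f_1,\tilde f_1$ are genuinely different points of $\hu_{1,4}^{\mathrm{ord}}$ or are glued by such an automorphism. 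Once all strata are tallied with their multiplicities, smoothness of the relevant points of $\hu_{1,4}^{\mathrm{ord}}$ (from \eqref{localring}, since a single ramification point lies over each node) guarantees that $\mathrm{length}\,\sigma_4^*([E_\infty,P])=\mathrm{deg}(\sigma_4)=16$, whence $N_4=16$.
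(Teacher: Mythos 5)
Your plan founders at the very first step: you assert that $f_R:=f_{|R}$ ``is totally ramified at $P$ and hence of full degree $4$.'' That was true in the degree $5$ case, where $f$ has a point of \emph{total} ramification at $P$, but in the degree $4$ case $f$ is only \emph{triply} ramified at $P$, so the Hurwitz bookkeeping only forces $\mathrm{deg}(f_R)\geq 3$. Consequently the analysis does not reduce to the two strata $\alpha\in\{1,3\}$ and $\alpha=2$ with $\mathrm{deg}(f_R)=\mathrm{deg}(f_1)=4$ that you describe. The paper's proof has a third case, explicitly flagged as having ``no equivalent in the proof of Theorem \ref{N5:attempt2}'': there $\mathrm{deg}(f_R)=\mathrm{deg}(f_1)=3$, the cover is \emph{unramified} over the node $B$, the fibre $f^{-1}(B)$ consists of four points $U,V,U',V'$, and $X$ acquires two further rational components $R_2$, $R_3$ mapping isomorphically onto the two components of $\Gamma$. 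In that stratum each of $f_R$ and $f_1$ is a degree $3$ cover with two total ramification points (essentially $t\mapsto t^3$ in suitable coordinates), the ramification order of $\hu_{1,4}^{\mathrm{ord}}\rightarrow\mm_{1,1}$ is $2=1+1$ rather than $4$, and the stratum contributes $4=2\times 2$ to the cycle. Your two strata contribute $4$ and $8$ respectively, so omitting this case yields $12$, not $16$, and the theorem fails.

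The rest of your outline is sound and matches the paper: the coordinate normalizations, the count of solutions of the ramification equations modulo $PGL(2)$, the use of the Claim from Theorem \ref{N5:attempt2} giving ramification order $\mathrm{ord}_U(f)+\mathrm{ord}_V(f)$, and the care about automorphisms of $X$ trivial on $R$ but nontrivial on $R_1$ are all exactly the ingredients the paper uses in cases (i) and (ii). But the enumeration of admissible covers must begin by listing \emph{all} possible degree distributions of $f$ over the components of $\Gamma$ compatible with a triple (not total) ramification at $P$, and it is precisely the low-degree configuration $\mathrm{deg}(f_R)=3$ that your verbatim transposition of the degree $5$ argument suppresses.
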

\begin{proof}
We proceed along the lines of the proof of Theorem \ref{N5:attempt2}, highlighting the things that are different. We start with an admissible cover $\xi:=\bigl[f:X\stackrel{4:1}\rightarrow \Gamma, \ P, x,
y, z\bigr]\in \hu_{1,4}^{\mathrm{ord}}$ such that the stabilization of $[X,P]$ is $[E_{\infty},P]$. As before, $R$ appears as an irreducible component of $X$ and $f(U)=f(V)=:B\in \Gamma_{\mathrm{sing}}$. We denote by $R_1$ the subcurve of $X$ meeting $R$ precisely at the points $U$ and $V$. We set $f_R:=f_{|R}:R\rightarrow \bP^1$ and$f_1:=f_{|R_1}:R_1\rightarrow \bP^1_1$, where $\Gamma=\bP^1 \cup_B \bP^1_1$.

\vskip 3pt

There are \emph{three} types of admissible covers possible for $\xi$. First, we could have $\mbox{deg}(f_R)=\mbox{deg}(f_1)=4$ and $f_R^*(B)=\alpha\cdot U+(4-\alpha)\cdot V\in \mbox{Div}(R)$ and $f_1^*(B)=\alpha\cdot U+(4-\alpha)\cdot V\in \mbox{Div}(R_1)$.
Let $x\in R\setminus\{P, U, V\}$ be the remaining triple ramification point of $f_R$ and we denote by $\{y,z\}\subseteq R_1\setminus\{U,V\}$ the remaining triple ramification points of $f_1$.

\vskip 3pt

\noindent {\bf (i)} $\alpha=1$.   Setting $P=0, U=1, V=\infty\in R$, we find a unique solution for $f_R$,  the one given by (\ref{f1form})
$$f_R(t)=\frac{t^3(t-4)}{t-1}.$$ The cover $f_R$ has no automorphism, for such an automorphism $\tau_R$ would have to fix both the marked
point $P$, as well as $U$ an $V$, hence $\tau_R=\mbox{Id}_R$. On the $R_1$-side, setting $U=1, V=\infty\in R_1$ and $y=0$, we have a unique choice for $f_1$ given by the same formula (\ref{f1form}). However, in this case, as in the proof of Theorem \ref{N5:attempt2}, $f_1$ does have an automorphism $\tau_1\in \mbox{Aut}(R_1)$ which fixes both points $U$ and $V$ and interchanges the ramification points $y$ and $z$. The map
$\hu_{1,4}^{\mathrm{ord}}\rightarrow \mm_{1,1}$ is ramified with order $4$ at the point $\xi$. All in all, one gets a contribution of $4=2\times 4\times \frac{1}{2}$ to the cycle $\sigma_4^*\bigl([E_{\infty}, P]\bigr)$ coming from the case when $\alpha\in \{1,3\}$. The factor $\frac{1}{2}$ is explained by the simple ramification of the map $\hu_{1,4}^{\mathrm{ord}}\rightarrow \hu_{1,4}$ at the point $\xi$.

\vskip 5pt

\noindent {\bf (ii)} $\alpha=2$. Setting $U=0, V=1\in R_1$ and $y =\infty\in R_1$, following (\ref{eq9}) we find two solutions for $f_1$, which are related via the $PGL(2)$-action on $R_1$. On the $R$-side, we set also $U=0, V=1\in R$ and $P=\infty \in R$, by using once more (\ref{eq9}) we find two solutions for $f_R$, which this time are not equivalent to one another, for an automorphism $\tau_R$ has to fix $U, V$, as well as $P$,
hence $\tau_R=\mbox{Id}_R$. All in all, we get a contribution of $2\times 4=8$ to the cycle $\sigma_4^*\bigl([E_{\infty},P]\bigr)$, where the factor $4$ equals the ramification index of the map $\hu_{1,4}^{\mathrm{ord}}\rightarrow \mm_{1,1}$ at each of the points $\xi$ considered.

\vskip 5pt

\noindent {\bf (iii)} This is the situation which has no equivalent in the proof of Theorem \ref{N5:attempt2}. In this case $\mbox{deg}(f_R)=3$ and $\mbox{deg}(f_1)=3$. The components $R$ and $R_1$ meet at the points $U$ and $V$ and
$f^{-1}(B)=\{U,V, U', V'\}$, where $\{U'\}= R\cap R_3$ and $\{V'\}=R_1\cap R_2$. Here $R_2$ and $R_3$ are smooth rational curves mapping isomorphically onto $\bP^1=f(R)$ and $\bP^1_1=f(R_1)$ respectively. Thus $f^{-1}(\bP^1)$ is the disjoint union of $R$ and $R_2$, whereas $f^{-1}(\bP^1_1)$ is the disjoint union of $R_1$ and $R_3$. Note that $f$ is unramified over the node $B$ of $\Gamma$.

\vskip 3pt

Modulo the $PGL(2)$-action, there are two choices for a map $f_R:R\rightarrow \bP^1$ of degree $3$  triply ramified at $P$ and at a further unspecified point $x\in R\setminus \{P\}$ and satisfying $f_R(U)=f_R(V)$. In coordinates, if we set $P=\infty$, $U=1$ and $V=0$, then the two choices are
$$f_R(t)=\Bigl(t-\frac{1}{2}+i\frac{\sqrt{3}}{6}\Bigr)^3 \ \ \mbox{ and } \ \ \ \tilde{f}_R(t)=-\Bigl(t-\frac{1}{2}-i\frac{\sqrt{3}}{6}\Bigr)^3.$$
Observe that if $\tau_R\in \mbox{Aut}(R)$ is the automorphism given by $\tau_R(t)=1-t$, thus $\tau_R(U)=V$, $\tau_R(V)=U$ and $\tau_R(P)=P$, then $\tilde{f}_R=f_R\circ \tau_R$.
The same applies for the component $R_1$ of $X$. There are two ways, say $f_{1}$ and $\tilde{f}_{1}$ of choosing a degree $3$ map triply ramified
at both $y$ and $z$ and having $U$ and $V$ in the same fibre. The maps $f_{1}$ and $\tilde{f}_{1}$ are related by an automorphism $\tau_{1}$ of $R_1$ which interchanges $U$ and $V$ and fixes $y$. We find that in total there are  there are \emph{two} points in $\hu_{1,4}$ of this type in $\sigma_4^{-1}\bigl([E_{\infty}, P]\bigr)$.

\vskip 3pt


A similar calculation like in the Claim in the proof of Theorem \ref{N5:attempt2} shows that the map $\hu_{1,4}^{\mathrm{ord}}\rightarrow \mm_{1,1}$ is ramified to order $2$ at both these points. All in all, we have a contribution of $4=2\times 2$ to the cycle $\sigma_4^{-1}\bigl([E_{\infty}, P]\bigr)$ coming from case (iii).

\vskip 3pt

Summarizing cases (i), (ii), (iii), we find  that $\mbox{length } \sigma_4^*\bigl([E_{\infty},P]\bigr)=4+8+4=16$, which finishes the proof.

\end{proof}

\section{The generating series of alternating Catalan numbers}
In this Section we explain how using basic facts from Schubert calculus coupled with the Lagrange Inversion formula one can derive
from Theorem \ref{degeneration_formula} both Theorems \ref{main} and \ref{maingen}.

\vskip 3pt

We fix $V:=\mathbb C^{2g+2}$ and set $\mathbb G:=G(2,V)$.
Recall the notation $\sigma_{\alpha_1,\alpha_0}$ for the Schubert cycle in $\bG$. We write $\sigma_{\alpha}:=\sigma_{\alpha{a}, 0}$ for each $\alpha \geq 1$. It is well-known that
$\sigma_1$ is a hyperplane section of $\bG$ in its Pl\"ucker embedding. In particular, $C_{2g}=\sigma_1^{2g}=\mbox{deg}(\bG)=\frac{1}{2g+1}{4g\choose 2g}$. We also recall Giambelli's formula
$\sigma_{\alpha_1,\alpha_0}=\sigma_{\alpha_1}\cdot \sigma_{\alpha_0}-\sigma_{\alpha_1+1}\cdot \sigma_{\alpha_0-1}\in H^*(\bG, \bZ)$.

\vskip 3pt

It is also known that $H^*(\bG, \mathbb Z)$ is generated by the classes $\sigma_1$ and $\sigma_2$ and the top intersection products involving these two classes are given by the following formula, see e.g. \cite{O} Remark 3.4

\begin{equation}\label{oliveira}
\sigma_1^{2m}\sigma_2^{2g-2m}=\sum_{i=0}^{2g-m} (-1)^i{2g-m \choose i}C_{2g-i}.
\end{equation}

\vskip 3pt

We are now in a position to prove Theorem \ref{main}:

\vskip 3pt

\noindent \emph{Proof of Theorem \ref{main}}.
Since we have shown that $N_4=N_5=16$, Theorem \ref{degeneration_formula} can be rewritten as
$$\mathfrak{A}_g=16^g\bigl(\sigma_4+\sigma_{3,1}\bigr)^g.$$
We are going to rewrite this expression in terms involving only the products appearing in (\ref{oliveira}).
Firstly, Giambelli's formula yields $\sigma_4+\sigma_{3,1}=\sigma_1 \sigma_3$. Applying Giambelli's formula
once more, we obtain $\sigma_3=\sigma_1\sigma_2-\sigma_{2,1}=\sigma_1(\sigma_2-\sigma_{1,1}$, where for the last equality we
have used Pieri's formula. One final application of Giambelli's formula yields $\sigma_{1,1}=\sigma_1^2-\sigma_2$, implying
$\sigma_3=2\sigma_1\sigma_2-\sigma_1^3$. Thus
$$\sigma_1^g\sigma_3^g=\sigma_1^{2g}(2\sigma_2-\sigma_1^2)^g=\sum_{k=0}^g\sum_{i=0}^{g-k}(-1)^{k+i}2^{g-k}{g\choose k}{g-k\choose i}C_{2g-i}$$
$$=\sum_{i=0}^g (-1)^i\Bigl(\sum_{k=0}^{g-k} (-1)^k 2^{g-k}{g\choose k}{g-k\choose i}\Bigr) C_{2g-i}=\sum_{i=0}^g (-1)^i2^i{g\choose i}C_{2g-i},$$
where we have used the identity $\sum_{k=0}^{g-i} (-1)^k2^{g-k}{g\choose k}{g-k\choose i}={g\choose i}2^i.$ This brings the proof to an end.
\hfill $\Box$

\subsection{Lagrange inversion for alternating Catalan numbers.}
In order to determine the generating function of the alternating Catalan numbers we use Lagrange inversion. The help of D. Oprea in this section is gratefully acknowledged.

\vskip 3pt

For a power series $f(w)=\sum_{n\geq 0} a_nw^n\in \mathbb Q[[w]]$, we denote its coefficients by $[w^n]f(w):=a_n$.
Suppose one can find two power series $\psi(z)$ and $\phi(z)$ with $[z^0]\phi(z)\neq 0$, such that the function $f(w)=\sum_{n\geq 0} a_nw^n$ can be written as
\begin{equation}\label{lagrangeform}
f(w)=\sum_{n\geq 0} w^n\Bigl([z^n](\psi(z)\phi^n(z))\Bigr).
\end{equation}
Then there exists a unique power series $u=u(w)$ such that $u(w)=w\phi\bigl(u(w)\bigr)$. Moreover one has
\begin{equation}\label{lagrangeinv}
f(w)=\frac{\psi(u)}{\phi(u)}\cdot \frac{du}{dw}=\frac{\psi(u)}{1-w\phi'(u)},
\end{equation}
where we refer to \cite{GJ} 1.2.4 for further details and examples.

\vskip 4pt

We shall now bring the generating function of the alternating Catalan numbers to the form (\ref{lagrangeform}).
To that end, for any $a\in \mathbb R$ we introduce the symbol
$${a\choose n}:=\frac{a(a-1)\cdots (a-n+1)}{n!},$$
thus we have $(1+z)^a=\sum_{n\geq 0} {a\choose n} z^a$.
With this notation, we observe that the Catalan numbers $C_n$ can be rewritten as
$$C_n=\frac{1}{n+1}{2n\choose n}=(-1)^n 2^{2n+1}{\frac{1}{2}\choose n+1}.$$
Using the expression of the alternating Catalan numbers from Theorem \ref{main}, we then have
$$\mathfrak{A}_g=16^g\sum_{s=0}^g (-2)^s{g\choose s} C_{2g-s}=\sum_{s=0}^g 2^{8g-s+1}{g\choose s}{\frac{1}{2}  \choose 2g-s+1}$$
$$=[z^{2g+1}]\  2^{8g+1}\Bigl(1+\frac{z}{2}\Bigr)^g \ \Bigl(1+z\Bigr)^{\frac{1}{2}}.$$
We can now complete the proof of Theorem \ref{maingen}.

\vskip 3pt

\noindent {\emph{Proof of Theorem \ref{maingen}.}
We introduce the auxiliary functions
$$\phi(z)=16\ \bigl(1+\frac{z}{2}\bigr)^{\frac{1}{2}} \ \ \mbox{ and } \ \ \psi(z)=\frac{1}{8}\ \bigl(1+z\bigr)^{\frac{1}{2}}\ \bigl(1+\frac{z}{2} \bigr)^{-\frac{1}{2}},$$
then form the function $f(w)=\sum_{n\geq 0} w^n\ [z^n]\bigl(\phi^n(z)\psi(z)\bigr)$.
Then the function $h(w):=\frac{1}{2}\bigl(f(w)-f(-w)\bigr)$ retaining only the odd coefficients of $f$ can be rewritten as
$$h(w)=\sum_{g\geq 0} w^{2g+1} [z^{2g+1}] \Bigl(1+\frac{z}{2}\Bigr)^g\ \Bigl(1+\frac{z}{2}\Bigr)^{-\frac{1}{2}}\cdot 2^{4(2g+1)-3}
=\sum_{g\geq 0} \mathfrak{A}_g w^{2g+1},$$
that is, $h(w)$ is the generating function of all alternating Catalan numbers. In order to apply (\ref{lagrangeinv}), we introduce the function $u=u(w)$ such that
$u=w\phi(u)$, from which we find
\begin{equation}\label{changevar}
w=\frac{u}{16\sqrt{1+\frac{u}{2}}}, \ \ \mbox{ or equivalently } \ \ \ u=16\bigl(4w+\sqrt{16w^2+1} \bigr).
\end{equation}
We compute $\frac{dw}{du}=\frac{u+4}{32(u+2)\sqrt{1+\frac{u}{2}}}$, hence the Lagrange inversion formula (\ref{lagrangeinv}) leads to
$$f(w)=\frac{\sqrt{\bigl(1+u\bigr)\ \bigl(1+\frac{u}{2}\bigr)}}{2(u+4)},$$
or equivalently
\begin{equation}\label{fmod}
f(w)=\frac{\sqrt{64w^2+1+16w\sqrt{16w^2+1}}}{8\sqrt{16w^2+1}},
\end{equation}
which leads to the claimed formula for $h(w)=\frac{1}{2}\bigl(f(w)-f(-w)\bigr)$.
\hfill $\Box$

\end{document}